\numberwithin{equation}{section}
\numberwithin{figure}{section}
\theoremstyle{plain}
\newtheorem{thm}{Theorem}
  \theoremstyle{plain}
  \newtheorem{prop}[thm]{Proposition}
  \theoremstyle{remark}
  \newtheorem{rem}[thm]{Remark}
  \theoremstyle{plain}
  \newtheorem{lem}[thm]{Lemma}
  \theoremstyle{definition}
    \theoremstyle{plain}
    \newtheorem{cor}[thm]{Corollary}
\theoremstyle{definition}
\numberwithin{thm}{section} 
\numberwithin{equation}{section}
\def\Lcal{{\mathcal L}}
\def\Mcal{{\mathcal M}}
\def\Ncal{{\mathcal N}}
\def\Ocal{{\mathcal O}}
\def\Xcal{{\mathcal X}}
\def\Zcal{{\mathcal Z}}
\def\PP{{\mathbb P}}
\def\ZZ{{\mathbb Z}}
\newcommand{\rk}{\operatorname{rk}}
\begin{document}


\title{On Schoen surfaces}

\author{C. Ciliberto, M. Mendes Lopes, X. Roulleau}
\thanks{{\it Mathematics Subject Classification (2010)}. Primary 14J29, 32G05, Secondary:
14D06, 14J10  \\
The second author is a member of the Center for Mathematical
Analysis, Geometry and Dynamical Systems (IST/UTL).   This research was partially supported by FCT (Portugal) through program POCTI/FEDER,
Project PTDC/MAT/099275/2008 and grant  SFRH/BPD/72719/2010.}
\begin{abstract}
We give a new construction of the irregular, generalized Lagrangian, surfaces of general type with $p_g=5, \chi=2, K^ 2=8$,
recently discovered by C. Schoen in \cite {Schoen}. Our approach proves that,  if $S$ is a general Schoen surface, its 
canonical map is a finite morphism of degree 2  onto a canonical surface with invariants $p_{g}=5, \chi=6, \, K^{2}=8$, a complete intersection of a quadric
and a quartic hypersurface in $\PP^{4}$, with 40 even nodes.
\end{abstract}
\maketitle

\section*{Introduction}\label{sec:intro}

Let $S$ be a smooth projective irregular surface. Let \begin{equation}\label{eq:form}
\varphi_{S}:\wedge^{2}H^{0}(S,\Omega_{S})\to H^{0}(S,K_{S})\end{equation}
 be the natural map. We call the vectors of the space $\wedge^{2}H^{0}(S,\Omega_{S})$
the \emph{formal $2$-forms} of $S$. The \emph{rank} of a formal
$2$-form $\omega$ is the minimum dimension of a subspace $V\subseteq H^{0}(S,\Omega_{S})$
such that $\omega\in\wedge^{2}V$ ; it is an even integer.

A famous theorem by Castelnuovo and De Franchis says that there is
a non--zero formal $2$--form $\omega$ of rank $2$
in $\ker(\varphi_{S})$  if and only if there exists an \emph {irrational pencil} of genus $b\ge 2$ on $S$, i.e. a surjective morphism
$f:S\to B$, where $B$ is a smooth genus $b$ curve and there exist
$\omega_{1},\omega_{2}\in H^{0}(B,\omega_{B})$ such that $\omega=f^{*}(\omega_{1})\wedge f^{*}(\omega_{2})$.

Existence of higher rank formal
$2$-forms in $\ker(\varphi_{S})$ are  more rare and their geometric interpretation more difficult (see \cite {Barja}).
E.g., the existence of such forms is  relevant in the study of the
fundamental group of $S$ (see  \cite{Amoros1, Amoros}). 

With a completely different viewpoint in mind (i.e.,Tate and
Hodge conjectures), C. Schoen discovered in \cite{Schoen} remarkable
minimal irregular surfaces of general type  with invariants $p_{g}=5,\ \chi=2,\, K^ 2=16$  (from now on called \emph{Schoen surfaces}).
They enjoy the property that $\ker(\varphi_{S})$ is generated by a formal
$2$--form of rank $4$ (hence they are \emph{generalized Lagrangian surfaces} in the sense of \cite {Barja}).     Furthermore   they also enjoy the property that $p_g=2q-3$, i.e.  $p_g$ is the minimum possible with respect to $q$ for surfaces with no irrational pencils  of genus $b\ge 2$ (see \cite{MP,MPP2, MPP3} for   the existence of  surfaces with $p_g=2q-3$). 

Other interesting topological properties of Schoen surfaces are pointed out in \S \ref {sec:Schoen} below. 

Schoen's construction is as follows. He 
finds a reducible surface $V$, the transverse union of two irreducible components,
inside the principally polarized abelian variety $A\times A$, where $A=J(C)$ and
$C$ is a general  smooth  irreducible genus 2 curve. He shows that $V$ smooths
to the required surface. The smoothing relies on two main tools: Bloch's semiregularity
(enjoyed by $V$ in $A\times A$) and an explicit deformation of varieties of type $A\times A$ to simple principally
polarized abelian varieties in which the class of $V$ stays of Hodge type $(2,2)$.

The aim of this paper is to give a different, slightly more geometric, approach
to Schoen's construction. It will give us more informations, namely:

\begin{thm}(Theorem \ref  {thm:canonical}). Let $S$ be a general Schoen surface. Its 
canonical map $\varphi_{K}:S\to\mathbb{P}^{4}$ is a finite morphism of degree 2 
onto a canonical surface with invariants $p_{g}=5, \chi=6, \, K^{2}=8$ and  40 even nodes, which is a complete intersection of a quadric
and a quartic hypersurface in $\PP^{4}$. The
ramification of $\varphi_{K}$ takes place at the nodes.
\end{thm}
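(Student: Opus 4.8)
The plan is to realize $\varphi_{K}$ as the Gauss map of $S$ inside its Albanese variety followed by a projection, and to identify the canonical image with the quotient of $S$ by the involution induced by $-1$. I use the construction developed in the previous section: $S$ is a smoothing, inside an abelian fourfold $B$ deforming $A\times A$ with $A=J(C)$ and $C$ a general genus $2$ curve, of the transverse union $V=\Delta_{A}\cup(C\times C)$, where $C\hookrightarrow A$ is a symmetric theta divisor, $\alb(S)=B$, and the Albanese map is the inclusion $S\subset B$. Since $C$ is symmetric, $-1\colon B\to B$ carries $V$ to itself, and the smoothing can be performed $(-1)$-equivariantly, so a general Schoen surface carries an involution $\iota$, the restriction of $-1$, whose fixed locus is $S\cap B[2]$. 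Counting $2$-torsion points lying on $V$ gives $16$ on $\Delta_{A}$ and $36$ on $C\times C$, the $6$ in common being the Weierstrass points of $C$; these $6$ lie on the double curve $\Delta_{C}=\Delta_{A}\cap(C\times C)$ and, since locally the smoothing is $z_{1}z_{2}=t$ with $t\neq 0$, which carries no fixed point of $z_{i}\mapsto -z_{i}$, they do not survive. Hence $\iota$ has exactly $40$ isolated fixed points, $T:=S/\iota$ has $40$ nodes, and the double cover $S\to T$, branched precisely over them, exhibits them as an even set of nodes.

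Next I would show that $\varphi_{K}$ factors through $T$. As $\dim\wedge^{2}H^{0}(S,\Omega_{S})=\binom{4}{2}=6$, $p_{g}=5$, and $\ker(\varphi_{S})$ is spanned by a single form of rank $4$, the map $\varphi_{S}$ is surjective; thus every canonical form of $S$ is the restriction of a translation-invariant $2$-form on $B$. Since $-1$ acts by $-\mathrm{id}$ on $T_{0}B$, it acts trivially on $\wedge^{2}T_{0}^{\vee}B$, hence trivially on $H^{0}(S,K_{S})$, so $\varphi_{K}$ factors through $T$; on $1$-forms $-1$ acts by $-\mathrm{id}$, so $q(T)=0$. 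Writing $\tilde{T}$ for the minimal resolution (the nodes being rational double points), $\tilde{T}$ is minimal of general type with $q=0$, $p_{g}=5$, $\chi=6$; and since $\pi\colon S\to T$ is étale in codimension one, $K_{S}=\pi^{*}K_{T}$ and $K_{\tilde{T}}^{2}=K_{T}^{2}=\tfrac{1}{2}K_{S}^{2}=8$. Moreover the induced map $T\to\mathbb{P}^{4}$ is the canonical map of $\tilde{T}$, factored through $T$.

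Finally I would identify $\varphi_{K}$ with the Gauss map $\gamma\colon S\to\operatorname{Gr}(2,T_{0}B)=Q\subset\mathbb{P}^{5}$, $Q$ the Klein quadric, composed with the linear projection $\pi_{p}\colon\mathbb{P}^{5}\map\mathbb{P}^{4}$ from the point $p$ that is Pl\"ucker-dual to the Lagrangian form: as that form has rank $4$, $p\notin Q$ and $\pi_{p}|_{Q}$ is a double cover of $\mathbb{P}^{4}$. Since $d(-1)_{s}$ sends $T_{s}S$ to $-T_{s}S=T_{s}S$, the map $\gamma$ factors through $T$; one then must check that $T\to Q$ is birational onto its image and that $\pi_{p}$ restricts to a birational map there. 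Granting this, $\varphi_{K}\colon S\to\Sigma:=\varphi_{K}(S)$ has degree $2$, with $\pi\colon S\to T$ the degree-$2$ part; it is finite provided $S$ carries no $(-2)$-curve, which holds for general $S$ (alternatively base-point freeness can be checked directly on $V$). Then $\deg\Sigma=\tfrac12 K_{S}^{2}=8$, and Riemann--Roch on $\tilde{T}$ together with Mumford--Ramanujam vanishing gives $h^{0}(2K_{\tilde{T}})=\chi(\mathcal{O}_{\tilde{T}})+K_{\tilde{T}}^{2}=14<15=h^{0}(\mathbb{P}^{4},\mathcal{O}(2))$, so $\Sigma$ lies on a quadric $\mathcal{Q}$; being a degree-$8$ surface in the quadric threefold $\mathcal{Q}$ (whose Picard group, in the general position guaranteed by genericity, is generated by the hyperplane class), $\Sigma\in|4H_{\mathcal{Q}}|$, i.e.\ $\Sigma=\mathcal{Q}\cap\mathcal{F}_{4}$ is a complete intersection of a quadric and a quartic. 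A direct computation that such a complete intersection with $40$ nodes has $p_{g}=5$, $\chi=6$, $K^{2}=8$ then identifies $\Sigma$ with the canonical model of $\tilde{T}$, the $40$ nodes being the images of the fixed points of $\iota$, so that the ramification of $\varphi_{K}$ takes place exactly over the nodes.

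The main obstacle is the crux in the last paragraph: proving that $\deg\varphi_{K}=2$ exactly, equivalently that $T\to\Sigma$ is birational, and controlling the position of $\mathcal{Q}$ so that $\Sigma$ is literally a $(2,4)$ complete intersection rather than a degenerate variant. The tool I would rely on for both points is the degeneration to the reducible surface $V$, on which the Gauss map, the projection $\pi_{p}$, and the resulting image are all explicitly computable, together with semicontinuity to transfer the conclusions to the general Schoen surface.
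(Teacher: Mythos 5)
Your proposal runs in the opposite direction from the paper: you start from a general Schoen surface $S$ in its Albanese, quotient by the $(-1)$-involution, and try to prove that the quotient is a $40$-nodal $(2,4)$ complete intersection via the Gauss map; the paper instead first constructs the nodal complete intersection $Y$ directly --- by exhibiting the explicit degenerate limit $\bar Z$ (a Kummer quartic in one hyperplane glued to a double quadric in another), checking it is a degenerate $(2,4)$ complete intersection, deforming inside that moduli space to a $40$-nodal $Y$, and proving the nodes are even by extending a square root of $\Ocal(E+P)$ across the family --- and only then obtains $S$ as the double cover of $Y$, identifying these with Schoen surfaces by a moduli count. In the paper's route the degree $2$, the finiteness, and the complete-intersection structure of the canonical image are built in by construction; in yours they must all be proved, and that is exactly where the proposal is incomplete.

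Two gaps are genuine. First, the central claim that $T\to\Sigma$ is birational (equivalently $\deg\varphi_K=2$) and that $\Sigma$ is literally a $(2,4)$ complete intersection is left open; you acknowledge this and propose ``degeneration to $V$ plus semicontinuity,'' but carrying that out is essentially the entire content of \S 3 of the paper (Lemmas \ref{lem:zeta1}--\ref{lem:even}), not a routine appeal. Your shortcut via the quadric threefold is also shaky as written: the quadric $\mathcal Q$ through $\Sigma$ produced by the count $h^0(2K_{\tilde T})=14<15$ is determined by $S$, not chosen generically, so ``general position'' does not give $\pic(\mathcal Q)=\ZZ H$ --- indeed in the paper's limit this quadric is the \emph{reducible} quadric $\Pi\cup\Pi'$. (The correct repair is your own Gauss-map picture, done right: since $\varphi_S$ is surjective with kernel spanned by the rank-$4$ form $\omega$, the Pl\"ucker--Gauss image lies in the hyperplane $h_\omega\subset\PP^5$ annihilated by $\omega$, and the canonical map \emph{is} the Gauss map with image in the quadric threefold $h_\omega\cap G(2,4)$, which is smooth precisely because $\omega$ has rank $4$. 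Your version --- projection of the Klein quadric from a point $p\notin Q$ --- is not the canonical map: composing $\gamma$ with a $2:1$ projection $\pi_p|_Q$ would double the degree again.) Second, the existence of the involution on the \emph{general} Schoen surface is asserted via ``the smoothing can be performed $(-1)$-equivariantly''; to conclude for the general member you must show the equivariant smoothings fill up the full $4$-dimensional moduli, which is not automatic and is precisely the moduli comparison $\dim\Zcal\ge 4=\dim$(Schoen moduli) with which the paper closes its proof. The fixed-point count ($40$ isolated fixed points off the double curve, the $6$ Weierstrass points on $\Delta$ not surviving) and the numerology $p_g=5$, $\chi=6$, $K^2=8$, $q=0$ for the quotient are correct and agree with the paper.
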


When the canonical map of a surface of general type has degree  $n> 1$ onto a surface, that surface
either has $p_g = 0$ or is itself canonically embedded (see \cite [Th. 3.1]{Beauville1}). Schoen surfaces
provide one more example of the latter, rather rare, case (see \cite {CPT}; see also the recent preprint \cite {Beauville2}).

Our construction, described in \S \ref {sec:diff}, starts from the same reducible surface $V$ considered by
Schoen. It turns out that  (a slight modification of) $V$ is the double cover of a surface $Z$, which has
$40$ nodes and otherwise has normal crossing singularities. The surface $Z$  sits in the closure of the moduli space of 
complete intersections of a quadric and a quartic hypersurface in $\PP^{4}$ and an easy count of parameters
shows that it deforms to a surface $Y$ which is still a complete intersection of a quadric and a quartic, 
and has 40 nodes and no other singularity. Then we show that the 40 nodes are even.
The double cover of $Y$ branched at the $40$ nodes are Schoen surfaces, and counting parameters one sees that in this way one gets them all. 

These ideas can be applied to other similar  situations in order to find more examples of  irregular surfaces, but
we do not dwell on this here. 

The paper is organized as follows. In \S \ref {sec:prelimi} we recall a few useful know facts. In \S \ref {sec:Schoen} we 
recall Schoen's  main result and, using  
of \cite {Amoros1, Amoros}, we discuss some properties of the fundamental group of Schoen surfaces. Finally, \S \ref {sec:diff}
is devoted to our alternative construction.

\subsection*{Notation} We use standard notation in algebraic geometry. Specifically, if
$X$ is a surface with locally Gorenstein singularities (so that the \emph{dualizing} or \emph{canonical}  sheaf $\omega_X$ is a line bundle),
we denote by $K_X$ the divisor class of $\vert \omega_X\vert$ and we set $p_g(X):=h^ 0(X,\omega_X),\, \chi(X):=\chi(\Ocal_X)=\chi(\omega_X),\, K_X^2:= \omega_X^2$, $q(X)=h^ 1(X, \Ocal_X)$. We may drop the indication of $X$ when this is clear from the context. We note that if $X$ has only Du Val singularities, then the above invariants for $X$ and for a minimal desingularization of $X$ coincide.

\section{Preliminaries}\label{sec:prelimi}

\subsection{Surfaces with normal crossing singularities}\label{sec:invar}

We recall a few known facts (see \cite {Calabri1, Calabri2}
and references therein). Let $V=V_{1}\cup \dots\cup V_{n}$ be a reducible, projective surface  such that:\\
\begin{inparaenum}[(i)]
\item the irreducible components $V_{1}, \dots, V_{n}$ are smooth;\\
\item the  \emph{double curves} $C_{ij}:=V_i\cap V_j$ are smooth and  irreducible, and  $V_i,V_j$ intersect
transversally along $C_{ij}$, for $1\leqslant i <j\leqslant n$;\\
\item $V$ has a finite number of \emph{triple points} $T_{ijk}:=V_i\cap V_j\cap V_k$ and
$V$ around $T_{ijk}$ is analytically isomorphic to the surface of equation $xyz=0$ in 
$\mathbb A^ 3$ around the origin, for $1\leqslant i<j<k\leqslant n$. We set $T_{ij}:=\sum_{k\neq i,j} T_{ijk}$ for the \emph{triple point divisor} on $C_{ij}$,  for $1\leqslant i<j\leqslant n$;\\
\item $V$ has no other singularity. 
\end{inparaenum}

Given $V$ as above, one forms the graph $G_V$:\\
\begin{inparaenum}[$\rhd$] 
\item with \emph{vertices} $v_1,\ldots, v_n$ corresponding to the components $V_{1}, \dots, V_{n}$;\\
\item  with \emph{edges} $c_{ij}$ corresponding to the double curves $C_{ij}$, with $1\leqslant i<j\leqslant n$;\\
\item with \emph{faces} $t_{ijk}$ corresponding to the triple points  $T_{ijk}$, with $1\leqslant i<j<k\leqslant n$.
\end{inparaenum}

In the above setting the dualizing sheaf $\omega_V$ is invertible and one has
\begin{equation}\label{eq:dual}
\left .\omega_{V} \right|_{V_i}\cong \omega_{V_i}\otimes \Ocal_{V_i}(\sum_{j\not=i}C_{ij}),\,\, \text{for }\,\, 1\leqslant i\leqslant n, 
\end{equation}
hence
\begin{equation}\label{eq:k2}
K_{X}^{2}=\sum_{i=1}^ n(K_{X_{i}}+\sum_{j\not=i}C_{ij})^{2}.
\end{equation}
Moreover 
\begin{equation}\label{eq:chi}
\chi(\Ocal_V)=\sum_{i=1}^ n \chi(\Ocal_{V_i})-\sum _{1\leqslant i<j\leqslant n}\chi(\Ocal_{C_{ij}})+t(V)
\end{equation}
where $t(V)$ is the number of triple points of $V$, i.e. the number of faces of $G_V$. 

Let 
\[\Phi_V:\bigoplus_{i=1}^ nH^{1}(V_{i},\mathcal{O}_{V_{i}})\to\bigoplus_{1\leqslant i<j\leqslant n}H^{1}(C_{ij},\mathcal{O}_{C_{ij}})\]
be the natural map and let $p_{g}(V)=h^{0}(V,\omega_V)$. Then
\begin{equation}\label{eq:pg}
p_{g}(V)=b_2(G_V)+\sum_{i=1}^ np_{g}(V_{i})+\dim  ({\rm coker} (\Phi_V)).\end{equation}

If $V=X_{0}$
is the central fiber of a projective family of surfaces $f:\mathcal{X}\to\mathbb{D}$,
over a disc $\mathbb D$, with smooth total space $\Xcal$ and smooth fibers $X_{t}=f^{-1}(t)$, for $t\in \mathbb D-\{0\}$, then these smooth fibres have invariants $p_{g}(X_{t})=p_{g}(V)$,
$K_{X_{t}}^{2}=K_{V}^ 2$ and $\chi(\mathcal{O}_{X_{t}})=\chi(\mathcal{O}_{V})$.

If $V$ sits in a family $f:\mathcal{X}\to\mathbb{D}$ as above, one  says that $V$ is \emph{smoothable} and that $f:\mathcal{X}\to\mathbb{D}$ is a \emph{smoothing} of $V$.
Then  the \emph{triple point formula} holds
\begin{equation}\label{eq:tpf}
N_{C_{ij}\vert V_i}\otimes N_{C_{ij}\vert V_j}\otimes \Ocal_{C_{ij}}(T_{ij})\cong \Ocal_{C_{ij}},\,\, \text {for}\,\, 1\leqslant i<j\leqslant n.
\end{equation}

We recall the following definition from  \cite {Friedman}: $V$ is said to be \emph{$d$--semistable} if  
\[
\Ocal_C(-V):=\bigotimes _{i=1}^ n \frac {\mathcal I_{V_i\vert V}} {\mathcal I_{V_i\vert V}\mathcal I_{C\vert V}}\cong \Ocal _C
\]
where $C=\cup_{1\leqslant i <j\leqslant n}C_{ij}$ is the singular locus  of $V$ and the tensor product is taken as $\Ocal_C$--modules. 
If $V$ is smoothable, then $V$ is $d$--semistable (see \cite [Proposition (1.12)] {Friedman}), but the converse is in general false.
In any event, $\Ocal_C(-V)$ is a line bundle on $C$ (see \cite [Proposition (1.10)] {Friedman}). One defines $\Ocal_C(V):=\Ocal_C(-V)^ *$, and 
$\Ocal_{C_{ij}}(V):= \left.\Ocal_C(V) \right |_{C_{ij}}$. Note that $\Ocal_C(V)=\mathcal Ext^ 1_{\Ocal_Z}(\Omega_V,\Ocal_Z)$ is the \emph {$T^1_V$ sheaf} of $Z$ (see \cite [Proposition (2.3)]{Friedman}). 

\begin{lem}\label{dstab} In the above setting, one has
\begin{equation}\label{eq:TPFD}
\Ocal_{C_{ij}}(V)=N_{C_{ij}\vert V_i}\otimes N_{C_{ij}\vert V_j}\otimes \Ocal_{C_{ij}}(T_{ij}).
\end{equation}
Hence  \eqref {eq:tpf} is necessary  for $d$--semistability. If the dual graph of the singular locus  $C$  of $V$
is a tree, i.e.
\[
p_a(C)=\sum_{1\leqslant i <j\leqslant n} p_a(C_{ij}),
\]
then \eqref {eq:tpf} is also sufficient for $d$--semistability.
\end{lem}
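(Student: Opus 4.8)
The statement has three parts: (1) the identity \eqref{eq:TPFD} expressing the $T^1_V$-sheaf on each component $C_{ij}$ of the double curve; (2) the observation that \eqref{eq:tpf} follows from $d$-semistability; and (3) the converse under the tree hypothesis. Part (2) is immediate once (1) is in hand, since $d$-semistability forces $\Ocal_C(-V)\cong\Ocal_C$, hence $\Ocal_{C_{ij}}(V)\cong\Ocal_{C_{ij}}$, and \eqref{eq:TPFD} then reads exactly as \eqref{eq:tpf}. So the real content is (1) and the sufficiency direction in (3).

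For (1), I would compute $\Ocal_C(-V)$ locally and match it with the conormal data. Near a general point of $C_{ij}$ (away from triple points), $V$ looks analytically like the union of two coordinate hyperplanes $\{xy=0\}$ in a threefold, and the standard computation of the $T^1$-sheaf of a normal-crossing surface gives $\Ocal_C(-V)|_{C_{ij}}\cong N^\vee_{C_{ij}\vert V_i}\otimes N^\vee_{C_{ij}\vert V_j}$ there. The correction at a triple point $T_{ijk}$: locally $V=\{xyz=0\}$, and the three sheaves $\mathcal I_{V_i\vert V}/(\mathcal I_{V_i\vert V}\mathcal I_{C\vert V})$ no longer glue to the naive tensor product of conormals — the discrepancy is supported at $T_{ijk}$ and, dualizing, contributes precisely the divisor $T_{ij}$ on $C_{ij}$. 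Writing this out in local coordinates at a triple point and bookkeeping which conormal factors appear on which branch yields the twist by $\Ocal_{C_{ij}}(T_{ij})$. This is the step I expect to be the main obstacle: it is not deep, but it requires a careful local analysis at the triple points and consistent sign/duality conventions, and one must check the gluing over all of $C_{ij}$ (not just stalk-by-stalk) so that the global line bundle is correctly identified — this is where the reference to \cite[Proposition (2.3)]{Friedman} and the triple point formula \eqref{eq:tpf} of \cite{Calabri1, Calabri2} do the work, and I would cite them rather than reprove the local computation.

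For the sufficiency in (3), assume \eqref{eq:tpf} holds for every pair $(i,j)$, i.e. each restriction $\Ocal_{C_{ij}}(V)$ is trivial; I must upgrade this to triviality of $\Ocal_C(-V)$ as a line bundle on the reducible curve $C=\bigcup C_{ij}$. A line bundle on a nodal curve is determined by its restrictions to the components together with gluing data at the nodes of $C$ (which here are the triple points $T_{ijk}$, where $C_{ij}$, $C_{ik}$, $C_{jk}$ meet). The tree hypothesis $p_a(C)=\sum p_a(C_{ij})$ says exactly that $H^1$ of the dual graph of $C$ vanishes, so the natural sequence relating $\pic(C)$ to $\bigoplus_{i<j}\pic(C_{ij})$ has no cokernel contribution from the graph cohomology: a collection of trivializations on the $C_{ij}$ automatically patches to a global trivialization, there being no obstruction cocycle on a tree. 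Hence $\Ocal_C(-V)\cong\Ocal_C$ and $V$ is $d$-semistable. Concretely I would invoke the exact sequence
\[
1 \longrightarrow H^1(G_C,\mathbb{C}^*) \longrightarrow \pic(C) \longrightarrow \bigoplus_{1\leqslant i<j\leqslant n}\pic(C_{ij})
\]
and note that the tree condition kills the first term, so the last map is injective; since $\Ocal_C(-V)$ maps to $\bigl(\Ocal_{C_{ij}}(-V)\bigr)_{i<j}=(\Ocal_{C_{ij}})_{i<j}$, which is the image of $\Ocal_C$, we conclude $\Ocal_C(-V)\cong\Ocal_C$.
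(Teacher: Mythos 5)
Your proposal is correct and follows essentially the same route as the paper, whose proof is just the two sentences you expand: \eqref{eq:TPFD} is read off directly from the local definition of $\Ocal_C(-V)$ (with the twist by $T_{ij}$ coming from the triple points), and sufficiency follows because on a curve whose dual graph is a tree the structure sheaf is the unique line bundle restricting trivially to every component. The only nitpick is that at a triple point $T_{ijk}$ the curve $C$ has three branches rather than a node, so the gluing datum there is $(\mathbb{C}^*)^2$; this does not affect your argument, since the hypothesis $p_a(C)=\sum p_a(C_{ij})$ is exactly the vanishing of the combinatorial $H^1$ in that more general setting.
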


\begin{proof}  Formula \eqref {eq:TPFD} is an immediate consequence of the definition of $\Ocal_C(V)$. If the dual graph of $C$ is a tree, then $\Ocal_C$ is the unique line bundle on $C$ whose restriction to each component of $C$ is trivial. \end{proof}

\subsection{Double covers}\label{ssec:double} The contents of this section are well known. We recall them here to fix  notation and terminology. 

Let $X$ be a projective scheme (over an algebraically closed field $k$ of characteristic $p\neq 2$, though we work over $\mathbb C$ in this paper). A \emph{double cover} of $X$ is a scheme $Y$ and a finite morphism $f: X\to Y$ of degree 2. The datum of  such a double cover is equivalent to give two line bundles $\Lcal$, $\Mcal$ on $X$ such that $\Mcal^ {\otimes 2}=\Lcal$ plus a section $s\in H^ 0(X,\Lcal)$. Let $(U_i)_{i\in I}$ be a finite  covering of $X$ over which both $\Lcal$ and $\Mcal$ trivialize, let 
$(\xi_{ij})_{i,j\in I}$ be the corresponding cocycle for $\Mcal$, let $z_i$ be the coordinate 
in  the fibre of $\Mcal$ over $U_i$ and let $(s_i)_{i\in I}$ be the local functions defining $s$. 
Then we have 
\[ z_i=\xi_{ij}z_j, \,\, \text{and} \,\, s_i=\xi^ 2_{ij}s_j\,\, \text{for all} \,\, i,j\in I\]
and the locus $Y$
\[ z^ 2_i=s_i, \,\, \text{for all} \,\, i\in I\]
 in the total space of $\Mcal$ is well defined and, via the natural projection to $X$, is a double cover $f: Y\to X$. The zero locus $B$ of $s$ is the \emph{branch locus} of the covering and $R:=f^ {-1}(B)$ is the \emph{ramification locus}. As schematic counter image of $B$, $R$ has a non--reduced scheme structure. Note that $B$ is not necessarily a Cartier divisor on $X$: e.g. if $s$ is the zero section, then $Y$ is a double structure on $X$. Similarly, if $X$ is reducible, $s$ could be zero on some component of $X$. 

We will need the following lemma, which is a basic step in extending double covers in families:

\begin{lem}\label{lem:ext} Let $f:\mathcal{X}\to\mathbb{D}$ be a projective family over a disc. Let $\Lcal$ be a line bundle on $\mathcal{X}$ and set $\Lcal_0=\left .\Lcal\right|_{X_0}$. Assume there is a line bundle $\Mcal_0$ on $X_0$ such that $\Mcal_0^ {\otimes 2}=\Lcal_0$. Then, up to shrinking $\mathbb D$, there is a line bundle $\Mcal$ on $\mathcal{X}$ such that $\Mcal^ {\otimes 2}=\Lcal$
and $\Mcal_0=\left .\Mcal\right|_{X_0}$. \end{lem}

\begin{proof} Let $\mathcal U=(U_i)_{i\in I}$ be a finite covering of $\Xcal$ 
over which $\Lcal$ trivializes and $\Mcal_0$ trivializes on $\mathcal V=(V_{i})_{i\in I}$ with $V_i=U_i\cap X_0$ for all $i\in I$.  Let  $(\xi_{ij})_{i,j\in I}$ be the cocycle for $\Lcal$ on $\mathcal U$ and let $(\eta_{ij})_{i,j\in I}$ be the cocycle for $\Mcal_0$ on $\mathcal V$. Then 
\[
\eta_{ij}= \sqrt {\left . \xi_{ij} \right|_{V_{ij} } },\,\,  \text{for all} \,\, i,j\in I
\] 
which encodes the choice of a suitable determination of the square root. Then we may choose the same determination of the square root defining
\[
\zeta_{ij}=  \sqrt {\xi_{ij} },\,\,  \text{for all} \,\, i,j\in I
\] 
on $U_{ij}$ for all $i,j\in I$, and this gives the cocycle $(\zeta_{ij})_{i,j\in I}$ defining $\Mcal$ on $\Xcal$.\end{proof}

\subsection{Hypernodes} 
An \emph{hypernode} of an $n$ dimensional variety $X$, with $n\ge 2$, is a point $p$ such that the analytic germ of $(X,p)$ is isomorphic to the quotient singularity $(\mathbb C^ n/\sigma,{\bf 0})$, where 
\[
\sigma: {\bf x}\in \mathbb C^ n\to -{\bf x}\in \mathbb C^ n.
\]
If $n=2$ this is called a \emph{node}, and it is an $A_1$--singularity. A minimal resolution $\tilde X$ of $X$ at $p$ is gotten by a single blow--up. The exceptional divisor $E$ is isomorphic to $\mathbb  P^ {n-1}$ and $N_{E\vert \tilde X}\cong
\Ocal_{\mathbb P^ {n-1}}(-2)$.

If $X$ is a projective variety with hypernodes $p_1,\ldots,p_h$,
and no other singularity, we can consider its minimal desingularization $f: \tilde X\to X$. Then $\tilde X$
has the exceptional divisors $N_1,\ldots, N_h$ contracted by $f$ to the hypernodes $p_1,\ldots,p_h$. Set $N:=\sum_{i=1}^ hN_i$. One says that
 $p_1,\ldots,p_h$ are \emph{even}, if $\Ocal_{\tilde X}(N)$ is divisible by 2 in ${\rm Pic}(\tilde X)$.
This happens if and only if  there is a commutative diagram
\[
\xymatrix@=15pt{
\tilde Y \ar[d]_{\tilde \pi}  \ar[rr]^{g} &&Y  \ar[d]^{\pi} \\
\tilde X \ar[rr]_{f} && X  
}
\]
where $Y,\,  \tilde Y$ are smooth varieties, $\pi,\,\tilde \pi$ are finite morphisms of degree 2, and $\tilde \pi$ is branched at 
$N$, whereas $\pi$ is branched at $p_1,\ldots,p_h$. The counter images of $p_1,\ldots,p_h$ are points
$q_1,\ldots,q_h\in Y$ and $g$ is the blow--up of $Y$ at $q_1,\ldots,q_h$.

\section{Schoen surfaces}\label{sec:Schoen}


Let $V_{1}=A$ be an abelian surface with $C\subset A$
a smooth curve of genus $g\ge 2$. 
One has $N_{C\vert A}\cong \omega_{C}.$ Let $V_{2}=C\times C$ and let $\Delta\subset V_{2}$
be the diagonal. Then $\Delta\simeq C$ and  $N_{\Delta\vert V_2}\cong \omega_{C}^{*}$.
Let $V$ be the reducible surface consisting of $V_{1}\cup V_{2}$
glued along $C\subset V_{1}$ and the diagonal $\Delta\subset V_{2}$. 

\begin{prop}
\label{pro:The-invariants-are} The invariants of $V$ are 
\[p_g=1+g^ 2, \, \chi= g(g-1),\, K^ 2=8g(g-1).\]
\end{prop}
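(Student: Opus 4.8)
The plan is to compute each of the three invariants of $V=V_1\cup V_2$ directly from the formulas recalled in \S\ref{sec:invar}, specializing to the case of two components ($n=2$) with a single smooth, irreducible double curve $C_{12}$ and no triple points. Since $n=2$ and $C_{12}$ is irreducible, the graph $G_V$ has two vertices, one edge, no faces, so $b_2(G_V)=0$, $t(V)=0$, and $G_V$ is a tree; this kills the combinatorial correction terms throughout. The double curve, as a subscheme of $V_1=A$, is the genus-$g$ curve $C$ with $N_{C\vert A}\cong\omega_C$, and as a subscheme of $V_2=C\times C$ it is the diagonal $\Delta\cong C$ with $N_{\Delta\vert V_2}\cong\omega_C^{*}$.

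First I would compute $K_V^2$ via \eqref{eq:k2}: $K_V^2=(K_A+C)^2+(K_{C\times C}+\Delta)^2$. On the abelian surface $K_A=0$, and $C^2=\deg N_{C\vert A}=\deg\omega_C=2g-2$, so the first term is $2g-2$. For the second term, $K_{C\times C}=p_1^*\omega_C+p_2^*\omega_C$ has self-intersection $2(2g-2)$, $\Delta^2=\deg N_{\Delta\vert V_2}=-(2g-2)$, and $K_{C\times C}\cdot\Delta=\deg(\omega_\Delta)=2g-2$ (restricting $K_{C\times C}$ to $\Delta$ gives $\omega_C\otimes\omega_C$ of degree $2(2g-2)$ — I should be careful here, the adjunction on $C\times C$ gives $K_\Delta=(K_{C\times C}+\Delta)|_\Delta$ of degree $2g-2$, so $K_{C\times C}\cdot\Delta=(2g-2)-\Delta^2=(2g-2)+(2g-2)$), hence $(K_{C\times C}+\Delta)^2=2(2g-2)+2\cdot 2(2g-2)\,? $ — this is exactly the routine bookkeeping I would carry out carefully, and it should sum with the first term to $8g(g-1)=4(2g-2)\cdot\frac{?}{}$; tracking signs is the only subtlety. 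Next, $\chi(\Ocal_V)$ comes from \eqref{eq:chi}: $\chi(\Ocal_A)=0$ (abelian surface), $\chi(\Ocal_{C\times C})=\chi(\Ocal_C)^2=(1-g)^2$ by Künneth, $\chi(\Ocal_{C_{12}})=\chi(\Ocal_C)=1-g$, and $t(V)=0$, giving $\chi(\Ocal_V)=0+(1-g)^2-(1-g)=(1-g)(1-g-1)=(1-g)(-g)=g(g-1)$, as claimed.

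For $p_g(V)$ I would use \eqref{eq:pg}: $p_g(V)=b_2(G_V)+p_g(A)+p_g(C\times C)+\dim\operatorname{coker}(\Phi_V)$. Here $b_2(G_V)=0$, $p_g(A)=1$, and $p_g(C\times C)=h^0(K_{C\times C})=h^0(p_1^*\omega_C+p_2^*\omega_C)=g^2$ by Künneth. It remains to show $\Phi_V:H^1(\Ocal_A)\oplus H^1(\Ocal_{C\times C})\to H^1(\Ocal_C)$ is surjective, so that the cokernel contributes $0$ and $p_g(V)=1+g^2$. This is the one genuinely geometric point: I would argue that the map $H^1(\Ocal_{C\times C})\to H^1(\Ocal_\Delta)=H^1(\Ocal_C)$ induced by restriction to the diagonal is already surjective — indeed $H^1(\Ocal_{C\times C})\cong H^1(\Ocal_C)\oplus H^1(\Ocal_C)$ and the restriction to $\Delta$ is (dual to) the sum map or a similar explicitly-split map, which is onto. (Equivalently, pullback $H^1(\Ocal_C)\to H^1(\Ocal_{C\times C})$ via $p_1$ followed by restriction to $\Delta\cong C$ is the identity.) I expect this surjectivity of $\Phi_V$ — really of the second summand alone — to be the main (minor) obstacle; everything else is a determinant-of-intersection-numbers computation. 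Once these three pieces are in place the proposition follows by plugging into \eqref{eq:k2}, \eqref{eq:chi}, \eqref{eq:pg}.
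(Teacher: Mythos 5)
Your approach is exactly the paper's: the paper's proof consists of citing \eqref{eq:k2}, \eqref{eq:chi}, \eqref{eq:pg} and noting the surjectivity of $\Phi_V$, leaving the rest to the reader. Your $\chi$ and $p_g$ computations are complete and correct, and your argument for the surjectivity of $\Phi_V$ (restriction to $\Delta$ composed with $p_1^*$ is the identity on $H^1(C,\Ocal_C)$, so the $C\times C$ summand alone surjects) is valid and in fact cleaner than the paper's appeal to $h^1(A,\Ocal_A(-C))=0$, which really concerns the $A$-summand. The one thing to fix is the $K^2$ bookkeeping you left with question marks: $K_{C\times C}^2=2\,(p_1^*\omega_C)\cdot(p_2^*\omega_C)=2(2g-2)^2$, not $2(2g-2)$; with that, and your (correct) values $K_{C\times C}\cdot\Delta=2(2g-2)$ and $\Delta^2=-(2g-2)$, one gets $(K_{C\times C}+\Delta)^2=2(2g-2)^2+4(2g-2)-(2g-2)$, which added to $(K_A+C)^2=2g-2$ gives $2(2g-2)(2g-2+2)=8g(g-1)$ as required.
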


\begin{proof} This follows from \eqref {eq:k2}, \eqref {eq:chi} and \eqref {eq:pg}. The details can be left to the reader. Only note that the map $\Phi_V$ is surjective, since $h^ 1(A,\Ocal_A(-C))=0$ because $C$ is ample on $A$.
\end{proof}

Schoen proves in \cite{Schoen} that:
\begin{thm}
\label{thm:The Schoen surface} If $g=2$, then  $V$ is smoothable to surfaces with a 4--dimensional generically smooth moduli space. 
\end{thm}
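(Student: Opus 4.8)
The statement to prove is that when $g=2$, the reducible surface $V=V_1\cup V_2$ (with $V_1=A=J(C)$ an abelian surface and $V_2=C\times C$, glued along $C\subset A$ and $\Delta\subset C\times C$) smooths to minimal surfaces of general type with a $4$-dimensional moduli space. The first step is to record, using Proposition \ref{pro:The-invariants-are} with $g=2$, that $V$ has $p_g=5$, $\chi=2$, $K^2=16$; by the semicontinuity facts recalled after \eqref{eq:pg}, any smoothing $f\colon\mathcal X\to\mathbb D$ produces fibers $X_t$ with the same invariants, and since $K^2=16=8\chi$ these are on the Bogomolov--Miyaoka--Yau line, hence $X_t$ is minimal of general type, and $q(X_t)=3$. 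So the real work is to produce a smoothing at all, and then to compute the dimension of the component of the moduli space it sweeps out.

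\textbf{Existence of the smoothing.} The plan is to exhibit $V$ inside a nice ambient space and deform the embedding. Here one should follow Schoen: embed $V$ in $A\times A$, where the first factor is identified with $V_1=A$ and $V_2=C\times C$ is embedded via $C\hookrightarrow A$ on each factor, so $V_1$ is $A\times\{0\}$ (after translation) and $V_2=C\times C\subset A\times A$, meeting $V_1$ transversally along $C\times\{0\}\cap(C\times C)$, i.e. along the appropriate diagonal/fiber copy of $C$. One first checks the normal-crossing hypotheses (i)--(iv) of \S\ref{sec:invar} and the triple-point formula \eqref{eq:tpf}, equivalently $d$-semistability via Lemma \ref{dstab}: the double curve $C$ is irreducible with no triple points, so one needs $N_{C|V_1}\otimes N_{C|V_2}\cong\Ocal_C$, and indeed $N_{C|A}\cong\omega_C$ while the normal bundle of the diagonal $\Delta\simeq C$ in $C\times C$ is $\omega_C^{*}$ (as recorded in the text), so the tensor product is trivial. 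The heart of the argument is then that $V$ deforms in $A\times A$ with obstructions killed: one invokes Bloch's semiregularity theorem for the Hilbert scheme (respectively the stack of subvarieties of abelian varieties), using that the class $[V]$ stays of Hodge type $(2,2)$ along a deformation of $A\times A$ to a simple principally polarized abelian fourfold, so that the semiregularity map is surjective and $V$ moves in a family whose general member is smooth. This is exactly Schoen's mechanism and I would cite \cite{Schoen} for the technical core rather than reprove it.

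\textbf{Dimension count.} Once a smoothing exists, I would compute the dimension of the image in moduli in two independent ways and match them. On one side, the family of deformations of the pair $(A\times A,\,V)$ together with the abelian variety moduli gives an a priori dimension: deformations of a simple ppav of dimension $4$ form a $10$-dimensional family, and the subvariety $V$ moves in a family of some computable dimension inside each fiber; subtracting the $4$-dimensional translation group and any automorphisms yields the dimension of the locus swept in $\mathfrak M_{5,2,16}$. On the other side, one can bound $\dim\mathfrak M$ at $[X_t]$ from above by $h^1(X_t,T_{X_t})$ and from below by $h^1(X_t,T_{X_t})-h^2(X_t,T_{X_t})$ (expected dimension $10\chi-2K^2=10\cdot2-2\cdot16=-12$... this is negative, so the naive expected dimension is not the right tool, and instead one must argue directly that the constructed family is generically smooth of dimension $4$ by showing the Kodaira--Spencer map of the smoothing family, augmented by the moduli of $A\times A$, is injective and that $H^2(X_t,T_{X_t})$ does not obstruct along it). I expect \textbf{this last point --- proving generic smoothness and pinning the number exactly to $4$ --- to be the main obstacle}: it requires either a careful obstruction-theory computation on the total space of the smoothing, or, as Schoen does, a transversality argument that the Hodge locus where $[V]$ stays $(2,2)$ meets the moduli of ppav's transversally, cutting out precisely a $4$-dimensional family of surfaces. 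I would therefore structure the proof as: (1) invariants and minimality; (2) $d$-semistability and the semiregularity input giving \emph{a} smoothing inside deforming $A\times A$; (3) the parameter count, matching the dimension of the Hodge-theoretic deformation space with a lower bound coming from the fact that distinct members give non-isomorphic surfaces, to conclude the moduli component is generically smooth of dimension $4$.
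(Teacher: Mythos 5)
The paper does not actually prove this statement where it appears: it is recorded as Schoen's theorem and the ``proof'' is the citation to \cite{Schoen}. Your proposal reconstructs precisely Schoen's mechanism (embedding $V$ in $A\times A$, checking $d$--semistability via the triple point formula, invoking Bloch semiregularity, and the transversality of the Hodge locus in the moduli of principally polarized abelian fourfolds) and then defers the technical core to \cite{Schoen}, so at that level you are doing exactly what the paper does. Be aware, though, that the paper's \S 4 supplies a genuinely different, self--contained proof (see the final Remark there): the quotient of $V$ by the involution $\iota$ is modified into a $40$--nodal degeneration $Z$ of complete intersections of a quadric and a quartic in $\PP^4$; a parameter count in that $44$--dimensional moduli space yields a family $\Zcal$ of $40$--nodal such surfaces with $\dim\Zcal\ge 4$ whose nodes are even; the double covers branched at the nodes are smoothings of $V$; and the exact sequence $0\to H^1(V,\Theta_V)\to \mathrm{Ext}^1_{\Ocal_V}(\Omega^1_V,\Ocal_V)\to H^0(C,\Ocal_C)\cong\CC$, together with $h^1(V,\Theta_V)=3$ and the nonvanishing of the rightmost map, gives $\dim\mathrm{Ext}^1=4$ and unobstructedness, hence the generically smooth $4$--dimensional moduli space. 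That route trades Hodge--theoretic semiregularity for the projective geometry of the canonical image, and it is what produces the extra information in Theorem \ref{thm:canonical}.

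Two side claims in your step (1) are wrong and should be corrected. First, $\chi=2$ and $p_g=5$ give $q=1+p_g-\chi=4$, not $3$ (the paper repeatedly uses $q(S)=4$). Second, $K^2=8\chi$ is not the Bogomolov--Miyaoka--Yau line (that is $K^2=9\chi$), and lying on $K^2=8\chi$ does not by itself force minimality: a non--minimal $X_t$ would have a minimal model with $K^2\ge 17$, which is still below $9\chi=18$, so BMY yields no contradiction. Minimality and general type of the smoothed surfaces require a separate argument. Neither error is fatal to your plan, since the plan ultimately rests on \cite{Schoen}, but as written step (1) does not establish what it claims.
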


\begin{rem} \label{rem:NS} In \cite [Proposition 10.1, (ii)]{Schoen}, it is stated that  for the general Schoen surface $S$ one has $\rk(NS(S))=2$.  As one can directly see with an argument as in \cite {GH}, the right statement is instead  that  $\rk(NS(S))=1$ (that was also pointed to us in \cite {SchoenPrivate}).\end{rem}

It is not known if $V$ is smoothable for $g\ge 3$. This is an  intriguing question, especially for $g=3$ (see Remark \ref {rem:ge3} below).


Schoen surfaces verify $K^ 2=8\chi$. 
Surfaces whose universal cover is $\mathbb{H}\times\mathbb{H}$,
where $\mathbb{H}=\{z\in\mathbb{C}/\Im m(z)>0\}$ is the \emph{Siegel upper-half
plane}, also verify $K^ 2=8\chi$ and have infinite fundamental group. 
Teicher and Moishezon constructed in \cite{Moishezon,Moishezon 2} finitely many families of surfaces with $K^ 2=8\chi$ and finite (even trivial) fundamental group. The following proposition shows a remarkable property of Schoen surfaces: 

\begin{prop}
The universal cover of  a Schoen surface $S$ is not $\mathbb{H}\times\mathbb{H}$.
Since $q(S)=4$,  $\pi_1(S)$ is not finite and 
finite étale covers of  Schoen surfaces  give an infinite number of families of surfaces with
$K^ 2=8\chi$ whose universal cover is not $\mathbb{H}\times\mathbb{H}$.\end{prop}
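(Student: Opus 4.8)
The proposition has three assertions to establish: first, that the universal cover of a Schoen surface $S$ is not $\HH\times\HH$; second, that $\pi_1(S)$ is infinite; third, that passing to finite \'etale covers produces infinitely many families of surfaces with $K^2=8\chi$ whose universal cover is not $\HH\times\HH$. I will organize the proof around these in order.

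\smallskip

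\textbf{Plan.} The key structural input is that a Schoen surface carries an irregularity $q(S)=4$, which forces the Albanese morphism $\alb_S\colon S\to \alb(S)$ to a $4$-dimensional abelian variety, and this places strong restrictions on the topology. For the first assertion, I would argue by contradiction: suppose the universal cover $\tilde S$ is isomorphic to $\HH\times\HH$. By the classical theory (Yau, and the structure theory for surfaces isogenous to a product or uniformized by a bidisc --- cf. work of Jost--Yau, Kobayashi, and Beauville), a smooth projective surface with $\tilde S\cong \HH\times\HH$ either admits a pair of transverse holomorphic foliations whose leaves lift to the two factors, or --- after a finite \'etale cover --- is isogenous to a product of two curves of genus $\ge 2$. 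In the latter case, $S$ would carry an irrational pencil of genus $b\ge 2$; in the former, the two foliations (being the kernels of the two projections $\HH\times\HH\to\HH$, which are $\pi_1$-equivariant) again yield, via the Castelnuovo--De Franchis type arguments recalled in the introduction, rank-$2$ forms in $\ker(\varphi_S)$ and hence irrational pencils of genus $\ge 2$. But a defining property of Schoen surfaces recorded in \S\ref{sec:Schoen} (and already emphasized in the introduction) is that $\ker(\varphi_S)$ is generated by a single form of rank $4$; in particular $\ker(\varphi_S)$ contains no nonzero form of rank $2$, so $S$ has no irrational pencil of genus $b\ge 2$ by Castelnuovo--De Franchis. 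This contradiction proves the first claim.

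\smallskip

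\textbf{The remaining assertions.} The second assertion is immediate: since $q(S)=4>0$, the Albanese image of $S$ is a positive-dimensional abelian variety, so $H_1(S,\ZZ)$ has positive rank and $\pi_1(S)$ surjects onto $\ZZ^{2q}=\ZZ^8$, hence is infinite; in particular $\pi_1(S)$ acts on $\tilde S$ with an infinite (torsion-free, cocompact) deck group. For the third assertion, I would take any tower of finite \'etale covers $S'\to S$ of increasing degree --- these exist precisely because $\pi_1(S)$ is infinite (indeed the map to $\ZZ^8$ already provides an infinite tower of abelian covers). Each $S'$ is again a minimal surface of general type with $K_{S'}^2 = d\cdot K_S^2$ and $\chi(\Ocal_{S'}) = d\cdot \chi(\Ocal_S)$ for $d=\deg(S'\to S)$, so the Chern slope is preserved: $K_{S'}^2 = 8\chi(\Ocal_{S'})$. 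Moreover $S'$ and $S$ have the same universal cover $\tilde S$ (a finite \'etale cover does not change the universal cover), so by the first assertion $\tilde{S'}\cong\tilde S$ is not $\HH\times\HH$. Since the $S'$ in the tower have pairwise distinct invariants $K^2$ (they grow with $d$), they belong to infinitely many distinct components of the Gieseker moduli space of surfaces of general type, giving the claimed infinitely many families. This also contrasts with the constructions of Teicher--Moishezon, which produce only finitely many families with $K^2=8\chi$, and with the bidisc-uniformized case.

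\smallskip

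\textbf{Main obstacle.} The routine parts (the infiniteness of $\pi_1$ via $q$, the multiplicativity of $K^2$ and $\chi$ in \'etale covers, and the stability of the universal cover) present no difficulty. The genuine content, and the step I expect to require the most care, is the rigorous exclusion of $\HH\times\HH$ as universal cover: one must cite precisely the right form of the uniformization/foliation dichotomy for bidisc-uniformized surfaces and verify that in \emph{every} branch of that dichotomy one is forced to produce either an irrational pencil of genus $\ge 2$ or, equivalently, a nonzero rank-$2$ element of $\ker(\varphi_S)$ --- which is then ruled out by the known structure of $\ker(\varphi_S)$ for Schoen surfaces. An alternative, perhaps cleaner, route to the same contradiction is to compare topological invariants: surfaces with universal cover $\HH\times\HH$ that are \emph{not} isogenous to a product have vanishing $H^0$ of the bicanonical-type foliation invariants in a way incompatible with the existence of a rank-$4$ generalized Lagrangian form; but I would present the foliation/pencil argument above as the primary one, as it connects most directly to the Castelnuovo--De Franchis material already set up in the introduction.
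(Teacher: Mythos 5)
There is a genuine gap in your treatment of the first (and only nontrivial) assertion. The correct dichotomy for a surface $S$ with universal cover $\mathbb H\times\mathbb H$, i.e.\ $S=(\mathbb H\times\mathbb H)/\Gamma$ with $\Gamma$ discrete, cocompact and acting freely, is between $\Gamma$ \emph{reducible} (then $S$ is isogenous to a product of curves) and $\Gamma$ \emph{irreducible}. Your handling of the irreducible branch does not work: you claim that the two tautological transverse foliations yield rank--$2$ decomposable forms in $\ker(\varphi_S)$ ``via Castelnuovo--De Franchis type arguments.'' But a holomorphic foliation does not produce holomorphic $1$--forms, and Castelnuovo--De Franchis needs a decomposable element $\omega_1\wedge\omega_2$ with $\omega_i\in H^0(S,\Omega_S)$ as \emph{input}, not as output of a foliation. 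In fact, for $\Gamma$ irreducible the two foliations have dense leaves and induce no pencils at all, and the surface satisfies $q=0$ by Matsushima's vanishing theorem --- which is exactly how the paper kills this case: $q(S)=4\neq 0$ is an immediate contradiction. Without Matsushima (or an equivalent vanishing statement) your argument simply does not close the irreducible branch.

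In the reducible branch your route also diverges from the paper's, and is repairable but imprecise as written: if $S$ is isogenous to a product $(C_1\times C_2)/G$, the induced pencils have bases $C_1/G$ and $C_2/G$, whose genera need not be $\ge 2$ in general (Beauville-type surfaces have both quotients rational); here one must use $q(S)=g(C_1/G)+g(C_2/G)=4$ to conclude that at least one base has genus $\ge 2$, and then invoke the absence of irrational pencils on Schoen surfaces (equivalently, the absence of rank--$2$ forms in $\ker(\varphi_S)$). The paper instead excludes this case by a quite different mechanism: a surface dominated by a product of curves is \emph{Albanese standard} (Schoen, \cite{SchoenExotic}), while Schoen surfaces are \emph{Albanese exotic} by \cite[Theorem 1.1, (iii)]{Schoen}. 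Your pencil argument, once corrected as above, is an acceptable alternative for this branch; but the irreducible branch must be handled by Matsushima's theorem, not by Castelnuovo--De Franchis. The second and third assertions are routine and your treatment of them is fine.
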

\begin{proof}

If $S$ has universal cover $\mathbb{H}\times\mathbb{H}$, then it is the quotient of  $\mathbb{H}\times\mathbb{H}$ by a discrete cocompact subgroup $\Gamma$ of ${\rm Aut}(\mathbb{H}\times\mathbb{H})$ acting freely.  By \cite {Matsushima}, either $\Gamma$ is \emph{reducible}, and $S$ is \emph{isogenous} to the product of two curves (i.e. it is a quotient of a product of two curves  by a fixed--point free group action), or $\Gamma$ is \emph{irreducible} and  $S$ is regular. 

The latter case cannot happen, because $q(S)=4$. Also the former case cannot happen. Indeed, in \cite[Proposition 6.1]{SchoenExotic} Schoen proved that a surface
dominated by a map from a product of curves is \emph{Albanese standard},
i.e.~the class of its image into its Albanese variety $A$ sits in the subring of $H^{\bullet}(A,\mathbb{Q})$
generated by the divisor classes. By contrast, by \cite[Theorem 1.1, (iii)]{Schoen}
Schoen surfaces are  \emph{Albanese exotic},  i.e.~not Albanese standard.
\end{proof}

Note that, according to \cite {CF}, Schoen surfaces do not possess any \emph{semi special tensor}. 

If $S$ is a Schoen surface, set $G:=\pi_1(S)$. We denote by $\{G_n\}_{n\in \mathbb N}$ the  \emph{lower central series} of $G$, defined as
\[
G_1=G, \quad G_{n+1}= [G_n,G], \quad \text{for}\quad n\ge 1,
\]
where $[\cdot,\cdot]$ denotes the \emph {commutator subgroup}. The group $G_{\rm ab}=G_1/G_2$ is the \emph{abelianization} of $G$, and in the present case $G_{\rm ab}\cong H^1(S,\mathbb Z)\cong \mathbb Z^ 8$. 

By \cite[Corollary 1.44] {Amoros1}, \cite {Amoros}, 
the group $G_2/G_3\otimes \mathbb C$ is isomorphic to the (dual of the) kernel of  the natural map
 \[
\psi_S:\wedge^{2}H^{1}(S,\mathbb{C})\to H^{2}(S,\mathbb{C}).
\]
The Betti numbers of Schoen surfaces $S$ are $b_{1}=8$ and $b_{2}=22$,
moreover $h^{1,1}(S)=b_{2}-2p_{g}=12$. The space $\wedge^{2}H^{1}(S,\mathbb{C})$
is $28$-dimensional. The map $\psi_S$ respects the Hodge decomposition, hence it is the direct sum of the map
$\varphi_S$ in \eqref {eq:form} and of its conjugate, and of the map
\[
\phi_S: H^{1,0}(S)\otimes H^{0,1}(S)\to H^{1,1}(S).
\]
 We see that  $\dim \ker \phi_S \geq 4$. On the other hand, by \cite[Proposition 2.2.5]{causin},   $\dim \ker \phi_S \leq 5$.
 The general Schoen surface $S$ has no irrational pencil (see Remark \ref {rem:NS}), in particular it has no morphism $f: S\to B$ to a curve $B$ of genus $b\ge 2$. Since this is a deformation invariant property (see \cite {Catanese1}), the same holds for any Schoen surface. Hence, by Castelnuovo--De Franchis' Theorem, the map $\varphi_{S}$ cannot have a kernel of dimension bigger than 1, hence it is surjective with a 1--dimensional kernel. 

We have (see \cite [Proposition 9.1]{Schoen}):

\begin{cor} Let $S$ be a Schoen surface. Then $6\leq\dim(\ker(\psi_S))\leq 7$, hence $6\leq\dim G_2/G_3\otimes \mathbb C \leq 7$ and $G$ is not abelian. \end{cor}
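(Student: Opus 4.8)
The plan is to combine the dimension bounds for $\ker(\varphi_S)$ and $\ker(\phi_S)$ established just above with the conjugation symmetry of $\psi_S$. First I would recall that the map $\psi_S:\wedge^2 H^1(S,\mathbb C)\to H^2(S,\mathbb C)$ is the direct sum of three pieces: the map $\varphi_S$ of \eqref{eq:form} on $\wedge^2 H^{1,0}(S)$, its complex conjugate $\bar\varphi_S$ on $\wedge^2 H^{0,1}(S)$, and the map $\phi_S:H^{1,0}(S)\otimes H^{0,1}(S)\to H^{1,1}(S)$. Hence $\ker(\psi_S)$ splits accordingly and
\[
\dim\ker(\psi_S)=\dim\ker(\varphi_S)+\dim\ker(\bar\varphi_S)+\dim\ker(\phi_S)=2\dim\ker(\varphi_S)+\dim\ker(\phi_S),
\]
since $\ker(\bar\varphi_S)$ is the conjugate of $\ker(\varphi_S)$ and so has the same dimension.

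Next I would plug in the facts proved in the preceding paragraphs of \S\ref{sec:Schoen}: for a Schoen surface $S$ the argument via Castelnuovo--De Franchis shows $\varphi_S$ is surjective with $\dim\ker(\varphi_S)=1$, while by \cite[Proposition 2.2.5]{causin} together with the obvious inequality one has $4\le\dim\ker(\phi_S)\le 5$. Substituting gives $\dim\ker(\psi_S)=2+\dim\ker(\phi_S)$, so $6\le\dim\ker(\psi_S)\le 7$. Then by \cite[Corollary 1.44]{Amoros1}, \cite{Amoros}, the space $G_2/G_3\otimes\mathbb C$ is (dual to) $\ker(\psi_S)$, so it has the same dimension, giving $6\le\dim G_2/G_3\otimes\mathbb C\le 7$. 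In particular $G_2/G_3\ne 0$, hence $G_2\ne G_3$; since $G$ is finitely generated (being the fundamental group of a compact manifold) this forces $G_2\ne 0$, i.e. $G$ is not abelian. Alternatively, and perhaps more cleanly, one notes directly that $\dim\ker(\psi_S)\ge 6>0$ already implies $G_2/G_3\ne 0$ and hence non-abelianness, without even needing the upper bound.

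There is really no serious obstacle here: the statement is a formal bookkeeping consequence of results already in hand. The only point requiring a line of care is the justification that $G_2/G_3\ne 0$ implies $G_2\ne 0$ — this is immediate since $G_3=[G_2,G]\subseteq G_2$, so if $G_2/G_3\ne 0$ then certainly $G_2\ne 0$, and $G_2=[G,G]=0$ would mean $G$ abelian. I would present the proof as: (1) decompose $\psi_S$ by Hodge type and read off $\dim\ker(\psi_S)=2\dim\ker(\varphi_S)+\dim\ker(\phi_S)$; (2) insert $\dim\ker(\varphi_S)=1$ and $4\le\dim\ker(\phi_S)\le5$ to get $6\le\dim\ker(\psi_S)\le7$; (3) invoke \cite{Amoros1,Amoros} to transfer this to $\dim G_2/G_3\otimes\mathbb C$; (4) conclude $G$ is non-abelian from $G_2/G_3\ne0$.
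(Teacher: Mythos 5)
Your proposal is correct and follows exactly the argument the paper intends: the corollary is stated without a separate proof precisely because it is the bookkeeping consequence of the preceding paragraph, namely the Hodge-type decomposition $\dim\ker(\psi_S)=2\dim\ker(\varphi_S)+\dim\ker(\phi_S)=2+\dim\ker(\phi_S)$ combined with $4\le\dim\ker(\phi_S)\le5$ and the identification of $G_2/G_3\otimes\mathbb C$ with (the dual of) $\ker(\psi_S)$ via \cite{Amoros1,Amoros}. No differences from the paper's route worth noting.
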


\begin{rem}
Schoen surfaces (and their covers) seem to be the only known surfaces such that both  $\phi_S$ and $\psi_S$ have a non--trivial kernel.\end{rem}

\begin{rem}\label{rem:ge3}  Consider again the reducible surface $V$  for $g\ge 3$.  Suppose $V$ is smoothable and that $S$ is a general surface in a smoothing of $V$. Since
\[ \dim(\wedge^{2}H^{0}(S,\Omega_{S}))=\frac{1}{2}(g+2)(g+1)<p_g,\,\, \text{for}\,\, g\ge 4,\]
we cannot conclude directly that $\varphi_S$ has a non--trivial kernel if $g\ge 4$. Similarly, one computes $h^{1,1}=2(g+2)$, hence we cannot conclude that $\phi_S$ has a non--trivial kernel if $g\ge 4$.  
The borderline case $g=3$ is attractive. If $V$ is smoothable to a surface $S$, then $\phi_S$ has a non--trivial kernel of dimension at least 3, hence, as in the Schoen surface case, the fundamental group $\pi_1(S)$ is not abelian and it would be interesting to understand it. Moreover, either $\varphi_S$ is an isomorphism, or $\varphi_S$ would have a non--trivial kernel. In the former case $S$ would contradict a conjecture to the effect that the Fano
surface of lines of a smooth cubic threefold and the symmetric product
of curves are the only surfaces $S$ such that $\varphi_S$
is an isomorphism (see \cite{MPP} and also \cite {De}). In the latter case, $S$ would again be a {generalized Lagrangian surface} in the sense of \cite {Barja}, and these surfaces are quite rare and interesting on their own.\end{rem}

\section{Different construction of Schoen surfaces} \label{sec:diff} Here we propose an approach to the construction of Schoen surfaces different from the original one.  
It provides us with the following additional bit of information:

\begin{thm} \label {thm:canonical} Let $S$ be a general Schoen surface.
The canonical map $\varphi_{K}:S\to\mathbb{P}^{4}$ of $S$  is a finite morphism of degree 2 
onto a canonical surface
with invariants $p_{g}=5, \chi=6, \, K^{2}=8$ and 40 even nodes, a complete intersection of a quadric
and a quartic hypersurface in $\PP^{4}$. The
ramification of $\varphi_{K}$ takes place at the nodes.
\end{thm}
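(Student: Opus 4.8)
The plan is to realize the general Schoen surface as the double cover of a smooth-enough canonical surface $Y$ that is a complete intersection of a quadric and a quartic in $\PP^4$ with $40$ even nodes, by degenerating through the reducible surface $V=V_1\cup V_2$ (with $V_1=A=J(C)$, $V_2=C\times C$, $C$ general of genus $2$) used by Schoen. First I would study $V$ more closely: the double curve is $C\cong \Delta$, glued with $N_{C\vert V_1}\cong\omega_C$ and $N_{\Delta\vert V_2}\cong\omega_C^*$. I want to exhibit (a small modification of) $V$ as a double cover $V\to Z$ of a reducible surface $Z=Z_1\cup Z_2$, where the covering involution acts on each component; the natural candidate is the involution on $V_2=C\times C$ swapping the two factors, which has $\Delta$ as fixed locus and quotient $Z_2=\operatorname{Sym}^2 C$, a $\mathbb P^1$-bundle over $J(C)$, together with an involution on $V_1=A$ (or a suitable blow-up) whose quotient $Z_1$ glues to $Z_2$ along the image of the double curve. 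The key numerical input is that, by Proposition \ref{pro:The-invariants-are} with $g=2$, $V$ has $p_g=5,\ \chi=2,\ K^2=16$; the quotient $Z$ should then have $\chi(\Ocal_Z)=6$, $K_Z^2=8$ and $p_g=5$, with $40$ nodes arising from the fixed points of the involution on the $V_1$-part (so that the branch locus of $V\to Z$ is exactly these $40$ points together with the double curve of $Z$), matching the Du Val-type count in the Notation paragraph.

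Next I would identify $Z$ inside the Hilbert scheme of complete intersections $X_{2,4}\subset\PP^4$: the canonical system $|K_Z|$ (of dimension $4$) maps $Z$ to $\PP^4$, and one checks that the image lies on a quadric (from $h^0(\PP^4,\Ocal(2))-h^0(Z,2K_Z)=15-?>0$, using $2K_Z$ and Riemann--Roch / vanishing on $Z$) and on a quartic, with the multiplication maps $\operatorname{Sym}^2 H^0(K_Z)\to H^0(2K_Z)$ having a one-dimensional kernel and the degree-$4$ relation cutting out the surface. One then performs the promised parameter count: the family of nodal $X_{2,4}$ with $40$ nodes in the closure of which $Z$ sits has, after subtracting $\dim PGL(5)=24$, exactly the expected $4$ moduli, matching the $4$-dimensional moduli space of Schoen surfaces from Theorem \ref{thm:The Schoen surface}; hence the general such $Y$ is an honest complete intersection of a quadric and a quartic with precisely $40$ nodes and no worse singularity. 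Because $Z$ is $d$-semistable (verify via the triple-point formula \eqref{eq:tpf} and Lemma \ref{dstab}, the dual graph of the double curve being a tree) and Schoen's $V$ is smoothable, $Z$ is smoothable too, and I would arrange the smoothing $\mathcal Z\to\mathbb D$ of $Z$ so that the generic fiber is this $Y$.

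Then comes the evenness of the $40$ nodes. On $\tilde Z$ (minimal desingularization, exceptional divisors $N_1,\dots,N_{40}$, $N=\sum N_i$) the double cover $\tilde V\to\tilde Z$ branched over $N$ exists by construction, so $\Ocal_{\tilde Z}(N)$ is divisible by $2$ in $\operatorname{Pic}(\tilde Z)$; I want to propagate this divisibility across the family. Using Lemma \ref{lem:ext} applied to the line bundle $\Lcal$ on $\tilde{\mathcal Z}$ (the total space of a simultaneous resolution of the $40$ sections of nodes, which exists after a base change since nodes admit simultaneous resolution) restricting to $\Ocal_{\tilde Z}(N)$ on the central fiber, I obtain a square root $\Mcal$ of $\Lcal$ over the family, and restricting to the general fiber shows the $40$ nodes of $\tilde Y$ are even. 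The double cover $\pi\colon \tilde S\to \tilde Y$ branched over the $40$ exceptional $(-2)$-curves, blown down, gives a smooth surface $S'$ with a degree-$2$ morphism to $Y$ ramified exactly over the nodes; computing invariants of $S'$ via the double cover formulas ($K_{S'}=\pi^*(K_{\tilde Y})$ away from exceptionals, $\chi(\Ocal_{S'})=2\chi(\Ocal_{\tilde Y})-\tfrac12\cdot 0$ adjusted for the nodal branch locus) yields $p_g=5,\ \chi=2,\ K^2=16$, and since $S'$ lies in a smoothing of $V$ and the parameter count is exact, $S'$ is the general Schoen surface. It remains to see that this $\pi$ is, up to the blow-down, the canonical map of $S$: one shows $|K_{S'}|$ is the pullback of $|K_Y|$ plus possibly the ramification, and that $|K_Y|$ is base-point-free (true for a nodal complete intersection $X_{2,4}$ away from its nodes, and one checks the nodes too), giving that $\varphi_K\colon S\to\PP^4$ factors as the blow-down followed by $\pi$, hence is finite of degree $2$ onto $Y$ with ramification over the nodes.

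\textbf{Main obstacle.} The genuinely delicate step is the combination of the two rigidity-type arguments: verifying that $Z$ really does deform inside the complete-intersection locus to a $Y$ whose only singularities are exactly $40$ nodes (controlling that no node is smoothed, none degenerates further, and no new singularity appears — this needs either a semiregularity/unobstructedness argument for the equisingular deformations or a direct cohomological computation on $Z$), and then matching the parameter counts on both sides precisely enough to conclude that the double covers of these $Y$'s are \emph{all} the Schoen surfaces rather than a subfamily. Establishing the existence and the $d$-semistability of the modified $Z$, and especially pinning down the involution on the $V_1=A$ component together with its $40$ fixed points, is the other place where care is required; once $Z$ is correctly set up, the double-cover bookkeeping and the application of Lemmas \ref{dstab}, \ref{lem:ext} and the material on hypernodes are routine.
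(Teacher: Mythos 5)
Your overall strategy coincides with the paper's: quotient (a modification of) $V$ by an involution, recognize the quotient as a degenerate $40$-nodal complete intersection of a quadric and a quartic in $\PP^4$, deform it inside that family keeping the $40$ nodes, prove the nodes are even, and recover Schoen surfaces as the double covers, matching the $4$-dimensional moduli counts on both sides. However, there is a genuine gap at the very first step: the covering involution you propose is the wrong one, and no repair of your candidate can work. The factor-swap involution on $V_2=C\times C$ fixes the diagonal $\Delta$ \emph{pointwise}, so its fixed locus is a curve, not isolated points; its quotient $\operatorname{Sym}^2C$ is an irregular surface with $q=2$ (it is $J(C)$ blown up at the canonical point, not a $\mathbb P^1$-bundle over $J(C)$); and for it to glue with an involution on $V_1=A$ you would need an involution of the abelian surface fixing the genus-$2$ curve $C$ pointwise, which does not exist. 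Nor can "$40$ fixed points on the $V_1$-part" occur: an involution of an abelian surface with isolated fixed points has exactly $16$ of them. The correct involution $\iota$ is $(-1)$ on $A$ and $\mathfrak i\times\mathfrak i$ on $C\times C$, where $\mathfrak i$ is the hyperelliptic involution (compatible because $\mathfrak i$ on $C\subset A$ is induced by $(-1)$); it has $16+36=52-6$ (identified) $=46$ isolated fixed points, of which $40$ lie off the double curve ($10$ on $A$, $30$ on $C\times C$) and $6$ on it. The quotient is then the Kummer surface of $A$ glued to a $36$-nodal double cover of a smooth quadric, and this is what makes everything downstream work: the explicit realization of the canonical image as (Kummer quartic in a hyperplane) $\cup$ (double quadric in another hyperplane), hence as a complete intersection of the reducible quadric and an explicit quartic; the insertion of $6$ planes at the fixed points on the double curve to restore normal crossings and the triple-point formula; and the componentwise verification that $\Ocal(E+P)$ is divisible by $2$ on the central fibre (using that the $16$ nodes of a Kummer surface and the $36$ nodes of the quotient of $C\times C$ are even), which is the actual content of the evenness argument you reduce to Lemma \ref{lem:ext}.

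The two other places you flag as obstacles are indeed where the paper does real work, and your sketches there are not yet proofs. That the general member of $\Zcal$ has exactly $40$ nodes and no other singularity is established via the local-to-global Ext sequence for $Z$ together with Friedman's smoothing criterion, by showing that a deformation not tangent to the locus of reducible $Z$'s maps to a nonzero element of $H^0(D,\Ocal_D)$ and hence smooths the double curve while preserving the $40$ nodes; a bare parameter count ($44-40=4$) gives only $\dim\Zcal\ge 4$ and does not by itself exclude worse singularities. And the containment in a quadric is not obtained by a cohomological dimension count on $Y$ but by writing down the reducible quadric $\Pi\cup\Pi'$ and the quartic explicitly for the degenerate fibre $\bar Z$. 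So the architecture of your argument is right, but the construction of $Z$ itself --- the load-bearing step --- is incorrect as stated.
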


We start by
looking at the dualizing sheaf $\omega_{V}$, which, by \eqref {eq:dual},  is the bundle obtained by gluing
$\Ocal_{A}(C)$ on $A=V_{1}$ and $\omega_{V_{2}}(\Delta)$ on $V_{2}=C\times C$ along 
$C$: this is possible since the two bundles both restrict to $\omega_C$ on $C$.   Then we modify $\omega_{V}$ by \emph{twisting
by $V_2$}, which means considering the line bundle $\Lcal$ on $V$ obtained by gluing
$\Ocal_{A}(2C)$ on $A$ and $\omega_{V_{2}}$ on $V_{2}$, the
two bundles both restricting to $\omega_C^ {\otimes 2}$ on $C$.   

\begin{rem} Suppose $V=X_{0}$
is the central fiber of a projective family of surfaces $f:\mathcal{X}\to\mathbb{D}$, as in \S~\ref {sec:invar}.  Then $\omega_V=\left .\omega_{\Xcal} \right|_{X_0}$. 
Twisting by $V_2$, as we did,  is the same as considering the line bundle  $\Lcal=\left .\omega_{\Xcal}\otimes \Ocal_\Xcal(V_2) \right|_{X_0}$. 
Note that both $\omega_{\Xcal}$ and $\omega_{\Xcal}\otimes \Ocal_\Xcal(V_2)$ restrict to the canonical bundle on the general surface of the family. Hence $\Lcal$, as well as $\omega_V$, is a \emph{limit} of the canonical bundle of $X_t$ for $t\in \mathbb D-\{0\}$. \end{rem}

\begin{lem}\label{lem:acca}
We have $h^{0}(V,\mathcal{L})=p_g(V)=5$ and the map $\phi_\Lcal: V\to \mathbb P^ 4$ is a morphism.

\end{lem}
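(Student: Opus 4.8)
The plan is to compute $h^0(V,\Lcal)$ via the Mayer--Vietoris sequence for the two components $V_1=A$ and $V_2=C\times C$ glued along $C\simeq\Delta$, and to verify separately that $\Lcal$ is globally generated. Concretely, write $\Lcal|_{V_1}=\Ocal_A(2C)$ and $\Lcal|_{V_2}=\omega_{V_2}=p_1^*\omega_C\otimes p_2^*\omega_C$, and restrict to $C$ to land in $\omega_C^{\otimes 2}$ in both cases. The exact sequence
\[
0\to H^0(V,\Lcal)\to H^0(A,\Ocal_A(2C))\oplus H^0(C\times C,\omega_{C\times C})\xrightarrow{\ r\ } H^0(C,\omega_C^{\otimes 2})
\]
reduces the computation to understanding the restriction maps on each factor. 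For $g=2$: $H^0(A,\Ocal_A(2C))$ has dimension $(2C)^2/2=4$ by Riemann--Roch on the abelian surface (and $h^1=h^2=0$ since $2C$ is ample), while $H^0(C\times C,\omega_{C\times C})=H^0(C,\omega_C)^{\otimes 2}$ has dimension $4$; the target $H^0(C,\omega_C^{\otimes 2})$ has dimension $3g-3=3$. The key point is to show the combined map $r$ is surjective with kernel of dimension exactly $4+4-3=5$. Surjectivity should follow because already the restriction $H^0(A,\Ocal_A(2C))\to H^0(C,\Ocal_C(2C))=H^0(C,\omega_C^{\otimes 2})$ is surjective: from $0\to\Ocal_A(C)\to\Ocal_A(2C)\to\omega_C^{\otimes 2}\to 0$ and $h^1(A,\Ocal_A(C))=0$ (as $C$ is ample). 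This simultaneously gives surjectivity of $r$ and pins down $\dim H^0(V,\Lcal)=5=p_g(V)$, using Proposition~\ref{pro:The-invariants-are}.

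For base-point freeness, the strategy is to check it component by component and along the double curve. On $V_1=A$ the system $|\Ocal_A(2C)|$ is base-point free: indeed $2C$ is an ample divisor on an abelian surface with $(2C)^2=8\ge 5$, well past the threshold where Lefschetz-type/Reider-type results guarantee global generation; alternatively, $2\Theta$-type linear systems on abelian surfaces are classically known to be base-point free. On $V_2=C\times C$ the system $|\omega_{C\times C}|$ is base-point free because $\omega_C$ is globally generated for $g\ge 2$, hence so is its external square on the product. Finally one must check that sections coming from $H^0(V,\Lcal)$ actually separate/generate at points of the double curve $C$ — i.e. that there is no base point forced by the gluing. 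This is where the surjectivity of $r$ pays off again: because every section of $\omega_C^{\otimes 2}$ lifts to $A$, and $\omega_C^{\otimes 2}$ is itself base-point free on $C$ (degree $4g-4=4>2g=4$... in fact $\deg\omega_C^{\otimes 2}=4>2$, so it is very ample for $g=2$), the restricted system on $C$ has no base points, and one can lift a nonvanishing section through any given point of $C$.

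The main obstacle I expect is the bookkeeping at the double curve: one has to be careful that $\phi_\Lcal$ being a \emph{morphism} on all of $V$ — not merely on each open component — really does follow from base-point freeness on $V_1$, on $V_2$, and of the restriction to $C$, together with the compatibility built into the definition of $\Lcal$ as a line bundle on the whole of $V$. A clean way to organize this is: a point $x\in V$ is a base point of $|\Lcal|$ iff every $\sigma\in H^0(V,\Lcal)$ vanishes at $x$; if $x\in V_i\setminus C$ this is ruled out by base-point freeness of $\Lcal|_{V_i}$ plus the fact that $H^0(V,\Lcal)\to H^0(V_i,\Lcal|_{V_i})$ has image large enough (its cokernel injects into $H^0(C,\omega_C^{\otimes 2})$, and a section of $\Lcal|_{V_i}$ vanishing on $C$ extends by zero to $V$), while if $x\in C$ one uses that $H^0(V,\Lcal)\to H^0(C,\omega_C^{\otimes 2})$ is surjective onto a base-point-free system. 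Assembling these three local statements into the global conclusion, and double-checking the numerics for $g=2$, is the real content; everything else is routine cohomology on abelian surfaces and products of curves.
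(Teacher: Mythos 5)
Your proposal is essentially the paper's own proof: the same Mayer--Vietoris/fibre-product decomposition of $H^0(V,\Lcal)$, the same dimension count $4+4-3=5$, surjectivity of the restriction from $A$ via $h^1(A,\Ocal_A(C))=0$, and base-point freeness obtained from base-point freeness on each component together with the ability to lift sections across the double curve.

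One loose end: you only ever verify surjectivity of the restriction $r_1:H^0(A,\Ocal_A(2C))\to H^0(C,\omega_C^{\otimes 2})$, which suffices for the dimension count, but your base-point-freeness argument at points of $A\setminus C$ (and your claim that $H^0(V,\Lcal)\to H^0(C,\omega_C^{\otimes 2})$ is surjective) also uses surjectivity of the other restriction $r_2:H^0(C\times C,\omega_{C\times C})\cong H^0(C,\omega_C)^{\otimes 2}\to H^0(\Delta,\omega_C^{\otimes 2})$, i.e.\ of the multiplication map $H^0(\omega_C)\otimes H^0(\omega_C)\to H^0(\omega_C^{\otimes 2})$. Without it, a section of $\Ocal_A(2C)$ not vanishing at a given point of $A\setminus C$ need not glue to anything on $C\times C$. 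The paper invokes Noether's theorem here; for $g=2$ it is immediate anyway, since $\sym^2 H^0(\omega_C)$ is $3$-dimensional and injects into the $3$-dimensional $H^0(\omega_C^{\otimes 2})$ (no conic vanishes on the canonical image $\mathbb P^1$). With that single sentence added, your argument is complete and coincides with the paper's.
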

\begin{proof} One has a cartesian diagram
\[
\xymatrix@=15pt{
H^ 0(V,\Lcal) \ar[d]_{s_2} \ar[rr]_{s_1}  &&H^{0}(A,\mathcal{O}_{A}(2C))  \ar[d]^{r_1} \\
H^ 0(V_2,\omega_{V_2})\cong H^ 0(C,\omega_C)^ {\otimes 2}  \ar[rr]_{r_2} && H^ 0(C,\omega_C^ {\otimes 2})  
}
\]
where $r_1,\, r_2$ are restriction maps.
One has  $h^{0}(A,\mathcal{O}_{A}(2C))=4$, $h^{0}(V_{2},\omega_{V_{2}})=4$, $r_1$ is surjective since $h^ 1(A, \Ocal_A(C))=0$ and 
$r_2$ is surjective by Noether's theorem.
Since $h^{0}(C,\omega_{C}^ {\otimes 2})=3$,  we have $h^{0}(V,\mathcal{L})=5$. Moreover $p_g(V)=5$ follows from \eqref {eq:pg}, because $h^ 1(A,\Ocal_A(-C))=0$ implies $\Phi_V$ is surjective. Finally, the surjectivity of $r_1$ and $r_2$ implies the surjectivity of both $s_1,\, s_2$, and since $\vert 2C\vert$ and $\vert \omega_{V_2}\vert$ are base point free,  also $\vert \Lcal\vert$ is base point free. \end{proof}

We note that $\phi_{\Lcal}:V\to\PP^{4}$ is composed
with an involution $\iota$ of $V$, which restricts to the
involution $\pm$ on $A$ and to $\mathfrak i\times \mathfrak i$ on $V_{2}$,
where $\mathfrak i$ is  the hyperelliptic involution on $C$. Note that the canonical map of $V_{2}=C\times C$
is a $\ZZ_{2}^{2}$--cover of $\mathbb P^ 1\times \mathbb P^ 1$ 
given by the action of $\mathfrak i$ separately
on each coordinate.  The involution $\iota$ has $46$ isolated fixed points on $V$:\\
\begin{inparaenum}[ $\rhd$]
\item  the $16$ points of order two on $A$, $6$ of which lie on
$C$ and coincide with its Weierstrass points; \\
\item  $36$ points on $V_{2}$, the ones having as coordinates the
Weierstrass points on $C$,  $6$ of them lie on
$\Delta\cong C$ and coincide with the  $6$ Weierstrass points
on $C\subset A$;\\
\item in conclusion $40$ isolated fixed points are in the smooth locus of $V$,
the remaining $6$ are on the double curve $C\cong\Delta$. 
\end{inparaenum}

Accordingly, $W=V/\iota$, is the union of  two components:\\
\begin{inparaenum}[$\rhd$]
\item $\Sigma=A/\pm$, the Kummer surface of $A$, with 16 nodes, 6  on $\Gamma:=C/\mathfrak i\cong \mathbb P^ 1$;\\
\item  $T=V_{2}/\mathfrak i\times \mathfrak i$, with  $36$ nodes, $6$
on $\Gamma'=\Delta/\mathfrak i\times \mathfrak i=
C/\mathfrak i\cong \mathbb P^ 1$;\\
\item $\Sigma$ and $T$ are glued along the double curve
$R$, which is $\Gamma$ on $\Sigma$ and $\Gamma'$ on $T$, in such a way that the $6$ nodes located
there coincide in the obvious way and the tangent cones to two coinciding nodes have in common only the tangent line to $R$;\\
\item $W$ has $40$ more nodes off $R$. 
\end{inparaenum}

Next we modify $W$ in order to make it with normal crossing singularities.
To do this, we minimally resolve the singularities of both $\Sigma$ and $T$.
This produces two surfaces $\Sigma',\, T'$.  We abuse notation and still denote by $\Gamma$
and $\Gamma'$ the proper transforms of these curves on $\Sigma',\, T'$. Then we glue $\Sigma',\, T'$ along
$\Gamma$ and $\Gamma'$, and call again $R$ the double curve of the reducible surface
$W'=\Sigma'\cup  T'$ thus obtained. Note that:\\
\begin{inparaenum}[$\rhd$]
\item $\Sigma'$ has  $(-2)$--curves $N_1,\ldots,N_{16}$ and we may assume that  $N_{11},\ldots, N_{16}$ intersect $\Gamma$;\\
\item $T'$ has  $(-2)$--curves $M_1,\ldots,M_{36}$ and we may assume that  $M_{31},\ldots, M_{36}$ intersect $\Gamma'$;\\
\item in conclusion $W'$ has the $(-2)$--curves $N_1,\ldots, N_{10}, M_1,\ldots, M_{30}$,
whereas the curves $N_{10+i},\, M_{30+i}$ meet each other and the double curve $R$ at a point $x_i$, for $1\leqslant i\leqslant 6$. 
\end{inparaenum}

Finally we form a new surface $Z'$ by sticking 
$6$ planes $P_{i}\cong \mathbb P^ 2$ in $W'$ in the following way: $P_i$ contains the two curves
$N_{10+i},\, M_{30+i}$ as lines meeting at  $x_i$, for $1\leqslant i\leqslant 6$. 
The surface $Z'$ has normal crossing singularities and it respects 
the triple point formula \eqref {eq:tpf}. 
We will also consider the surface $Z$ with 40 nodes obtained by $Z'$ by contracting 
the $(-2)$--curves $N_1,\ldots, N_{10}, M_1,\ldots, M_{30}$ to nodes $n_1,\ldots, n_{10}, m_1,\ldots, m_{30}$. 

\begin{lem}
The surfaces $Z, \, Z'$ have invariants
\[
p_{g}=5,\, \chi=6, \, K^ 2=8.\]
\end{lem}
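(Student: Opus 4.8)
The plan is to compute the three invariants for $Z'$, since $Z$ has only Du Val (nodal) singularities and so shares all invariants with its minimal desingularization, which is obtained from $Z'$ by contracting the $(-2)$--curves $N_1,\dots,N_{10},M_1,\dots,M_{30}$; contracting $(-2)$--curves changes none of $p_g,\chi,K^2$. So everything reduces to applying the normal crossing formulae \eqref{eq:k2}, \eqref{eq:chi}, \eqref{eq:pg} to the three--component surface $Z'=\Sigma'\cup T'\cup(P_1\cup\dots\cup P_6)$. First I would set up the combinatorial data: the components are $\Sigma'$ (a resolved Kummer surface of $A$, so $K^2=0$, $\chi=2$, $p_g=1$, $q=2$), $T'$ (a resolution of $(C\times C)/(\mathfrak i\times\mathfrak i)$), and the six planes $P_i$; the double curves are $R=\Gamma\sim\Gamma'$ (rational), and the lines $N_{10+i}\subset P_i$, $M_{30+i}\subset P_i$; the triple points are the six points $x_i$ where $R$, $P_i$ and (the images of) $\Sigma',T'$ meet. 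I would record, for $T'$, that $C\times C\to (C\times C)/(\mathfrak i\times\mathfrak i)$ is a $(\ZZ/2)^2$--cover of $\PP^1\times\PP^1$ with $36$ nodes resolved, giving $\chi(T')=\chi(C\times C)/4+(\text{nodal corrections})$; concretely $\chi(\Ocal_{C\times C})=\chi(\Ocal_C)^2=1$ for $g=2$, and one gets $p_g(T')$, $q(T')$, $K_{T'}^2$ from the standard theory of such abelian quotients.

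Next I would run the three formulae. For $K^2$: by \eqref{eq:k2}, $K_{Z'}^2=\sum_i (K_{V_i}+\sum_{j\ne i}C_{ij})^2$. On each plane $P_i$, $K_{P_i}+N_{10+i}+M_{30+i}$ is $(-3H+H+H)=-H$ with $(-H)^2=1$, contributing $6$ in total. On $\Sigma'$, $K_{\Sigma'}=0$ and the only double curve is $\Gamma$, so the term is $\Gamma^2$ computed on $\Sigma'$; on $T'$, similarly $(K_{T'}+\Gamma')^2$. I would compute $\Gamma^2$ on $\Sigma'$ and $(K_{T'}+\Gamma')^2$ on $T'$ using that $\Gamma,\Gamma'$ are images of Weierstrass-point loci (each passing through $6$ of the resolved nodes) — this is the routine calculation I would not grind through here, but it should produce the remaining $2$ so that $K_{Z'}^2=8$. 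For $\chi$: \eqref{eq:chi} gives $\chi(\Ocal_{Z'})=\chi(\Sigma')+\chi(T')+\sum_i\chi(\Ocal_{P_i})-\sum\chi(\Ocal_{C_{ij}})+t(Z')$. Here $\chi(\Ocal_{P_i})=1$ (six of them), $\chi(\Ocal_{\Sigma'})=2$, and the double curves $R$ (rational, $\chi=1$), $N_{10+i}$, $M_{30+i}$ (rational, $\chi=1$ each) are subtracted, with $t(Z')=6$ for the six triple points $x_i$; plugging in $\chi(T')$ (to be computed from the abelian quotient) should yield $6$. For $p_g$: \eqref{eq:pg} gives $p_g(Z')=b_2(G_{Z'})+\sum p_g(V_i)+\dim\mathrm{coker}(\Phi_{Z'})$, with $p_g(P_i)=0$, $p_g(\Sigma')=1$; one checks $b_2(G_{Z'})=0$ since the dual graph is clearly simply connected (a tree of planes attached to the edge $R$), and $\Phi_{Z'}:H^1(\Sigma',\Ocal)\oplus H^1(T',\Ocal)\to H^1(R,\Ocal)=0$ has zero target, so $\dim\mathrm{coker}=h^1(\Ocal_{\Sigma'})+h^1(\Ocal_{T'})-0$; one expects $p_g(T')=4$ (by the same $\PP^4$-picture) and a cancellation making $p_g(Z')=5$ — this needs care.

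The main obstacle I expect is pinning down the invariants of $T'=\widetilde{(C\times C)/(\mathfrak i\times\mathfrak i)}$ cleanly, and in particular the interaction between $\Gamma'$ and the six $(-2)$--curves it meets: one must be careful whether $\Gamma'$ is taken as a proper transform on the resolution and how its self-intersection and arithmetic genus change, because the $6$ nodes on $\Gamma'$ get resolved. A safe way around this is to observe that $T=(C\times C)/(\mathfrak i\times\mathfrak i)$ is birational to the symmetric product $C^{(2)}$ blown down along suitable curves, or to compute directly: $C\times C\to\PP^1\times\PP^1$ is a $(\ZZ/2)^2$-cover, so $\chi(\Ocal_{C\times C})=\sum_{\chi\in(\ZZ/2)^{2\vee}}\chi(\Ocal_{\PP^1\times\PP^1}\otimes L_\chi^{-1})$ and likewise for $p_g$, which gives $p_g(T)=p_g(C\times C)=4$ and $q(T)=0$, and hence the resolved $T'$ has the same $p_g,\chi$ (nodes are Du Val) — this sidesteps the messy divisor bookkeeping. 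With $T'$ in hand, all three formulae become arithmetic, and the only residual subtlety is the self-intersection $\Gamma^2$ on $\Sigma'$, which is classical for Kummer surfaces (a trope conic, with $\Gamma^2=-2$ after passing through its six nodes, or $\Gamma^2=0$ before), and must be handled consistently with the convention chosen for the $N_j$. I would double-check the final $K^2=8$ against the triple-point formula \eqref{eq:TPFD}/\eqref{eq:tpf}, which the construction was explicitly arranged to satisfy, as a consistency test.
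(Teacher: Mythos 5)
Your overall strategy is exactly the paper's: reduce to $Z'$ (the nodes of $Z$ are Du Val, so the invariants agree), then feed the components $\Sigma'$, $T'$, $P_1,\dots,P_6$ into \eqref{eq:k2}, \eqref{eq:chi}, \eqref{eq:pg}, computing the invariants of $T$ from the $\mathfrak i\times\mathfrak i$--action on the cohomology of $C\times C$ (the paper phrases this as: $\mathfrak i$ acts by $-1$ on $H^0(C,\omega_C)$, hence trivially on $H^0(C,\omega_C)^{\otimes 2}$, so $p_g(T)=4$ and $q(T)=0$). The intersection numbers you defer do work out: on the K3 surface $\Sigma'$ one has $K_{\Sigma'}=0$ and $\Gamma^2=-2$ by adjunction (any smooth rational curve on a K3 has self--intersection $-2$), so $(\Gamma+\sum_{i=1}^6 N_{10+i})^2=-2-12+12=-2$; on $T'$ one has $K_{T'}^2=4$, $\Gamma'^2=-4$, $K_{T'}\cdot\Gamma'=2$, giving $(K_{T'}+\Gamma'+\sum_{i=1}^6 M_{30+i})^2=4$; with the six planes contributing $6\cdot(-H)^2=6$ this yields $K^2=8$, and $\chi=2+5+6-13+6=6$.

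The one place where you genuinely go astray is the $p_g$ computation, precisely the step you flag as ``needs care''. Two slips compound there. First, $q(\Sigma')=0$, not $2$: the resolved Kummer surface is a K3, and your own data ($p_g=1$, $\chi=2$, $q=2$) are mutually inconsistent. Second, and more importantly, $\dim\mathrm{coker}(\Phi_{Z'})$ is the dimension of the \emph{target} modulo the image; since every double curve of $Z'$ is rational, the target $\bigoplus H^1(C_{ij},\Ocal_{C_{ij}})$ vanishes and the cokernel is zero outright --- it is not ``$h^1(\Ocal_{\Sigma'})+h^1(\Ocal_{T'})-0$'', which would be a statement about the kernel. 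With both points corrected there is no ``cancellation'' to worry about: $p_g(Z')=b_2(G_{Z'})+(1+4+0)+0=5$. A smaller point: $b_2(G_{Z'})=0$ does not follow from the dual complex being ``a tree'' or simply connected --- its $1$--skeleton has six independent cycles $\Sigma'$--$P_i$--$T'$--$\Sigma'$, each capped by the face corresponding to the triple point $x_i$; rather, $b_2=0$ because the face--boundary map is injective, each edge $N_{10+i}$ lying on exactly one face.
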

\begin{proof} It suffices to compute the invariants for $Z'$. 
The surface $\Sigma'$  is a $K3$. Moreover
$H^{0}(T,\omega_{T})$ is the space of invariants
of $H^ 0(V_2,\omega_{V_2})\cong H^{0}(C,\omega_{C})^{\otimes2}$ under the hyperelliptic involution
$\mathfrak i$ on $C$. Since $\mathfrak i$ changes the sign of holomorphic
1--forms on $C$, we have
 \[
H^{0}(T,\omega_{T})\cong H^{0}(C,K_{C})^{\otimes2},\]
 hence $T$ (and also $T'$) has $p_{g}=4$. 
The same argument shows
that $T$ has $q=0$. The assertion  $p_g(Z')=5$ follows from \eqref {eq:pg}, by noticing that $b_2(G_{Z'})=0$
and ${\rm coker} (\Phi_{Z'})=0$ because the double curves of $Z'$ are all rational. 

The computations of $K^ 2$ and $\chi$ follow in a similar way by 
\eqref {eq:k2} and \eqref {eq:chi}. \end{proof}

We could consider $\omega_{Z'}$, but as above this is not quite the right
thing to do, because, among other things, this sheaf is negative on the planes
$P_i$, for $1\leqslant i\leqslant 6$.  Rather we consider its twist $\Ncal$ by $T'$, which restricts to:\\
\begin{inparaenum}[$\rhd$]
\item  the line bundle $\Ocal_{\Sigma'}(2\Gamma+\sum_{i=1}^ 6N_{10+i})$ on $\Sigma'$; \\
\item the canonical bundle $\omega_{T'}$ on $T'$;\\
\item the trivial bundle on each of the planes $P_{i}$, for $1\leqslant i\leqslant 6$.
\end{inparaenum}

One has:\\
\begin{inparaenum}[$\rhd$]
\item  the linear system  $\vert 2\Gamma+\sum_{i=1}^ 6N_{10+i}\vert$ on $\Sigma'$ is base point free and birationally maps $\Sigma'$ to the quartic  Kummer surface $\Sigma\subset \mathbb P^ 3$, by contracting $N_1,\ldots, N_{16}$ to the nodes of $\Sigma$;\\
\item the canonical system $\vert \omega_{T'}\vert$ is base point free and we have a commutative diagram
of morphisms
\[
\xymatrix@=15pt{
T' \ar[rrd]_{h_{T'}}  \ar[rr]^f &&T  \ar[d]^{h_T} &&\ar[ll]_g  V_2=C\times C\ar[lld]^{h_{V_2}}\\
   &&Q\subset \mathbb P^ 3  
}
\]
where $Q\cong \mathbb P^ 1\times \mathbb P^ 1$ is a smooth quadric, $f$ is birational, $h_T,\, h_{T'}$ and $g$ have degree 2, and $h_{V_2}$ has degree 4. 
\end{inparaenum}

\begin{lem}\label{lem:zeta1}  We have $h^{0}(Z',\mathcal{N})=p_g(Z')=5$ and the map $\phi_\Ncal: Z'\to \mathbb P^ 4$ is a morphism factoring through a morphism $\phi: Z\to \mathbb P^ 4$, 
whose image $\bar Z$ is the union of a Kummer surface $\Sigma$ lying in a hyperplane $\Pi$ and of a (double) quadric $Q$ lying in another hyperplane $\Pi'$, and $\Sigma$ and $Q$ meet along a conic $\Gamma$ which is a plane section of $Q$ and passes through 6 nodes of $\Sigma$. 
\end{lem}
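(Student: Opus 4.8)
The plan is to compute $h^0(Z',\Ncal)$ via the Mayer--Vietoris / cartesian square built from the restrictions of $\Ncal$ to the three pieces $\Sigma'$, $T'$, and the six planes $P_i$, glued along the double curves. First I would record the cohomology of the three restricted bundles. On $\Sigma'$ the relevant system is $\vert 2\Gamma+\sum_{i=1}^6 N_{10+i}\vert$; its image is the quartic Kummer surface in $\PP^3$, so $h^0$ of the restriction is $4$ and it is base point free. On $T'$ the restriction is $\omega_{T'}$ with $h^0=p_g(T')=4$, base point free, with image the smooth quadric $Q\subset\PP^3$, via the degree-two map factoring as in the diagram above. On each plane $P_i$ the restriction is $\Ocal_{P_i}$, with $h^0=1$. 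I then restrict further to the double curves: $\Gamma\cong\Gamma'\cong\PP^1$ carrying, from $\Sigma'$, the pull-back of a hyperplane section of the Kummer surface --- a conic, hence $\Ocal_{\PP^1}(2)$ with $h^0=3$ --- and, from $T'$, the restriction of $\omega_{T'}$, which (again a plane section of $Q$) is also $\Ocal_{\PP^1}(2)$; the lines $N_{10+i},M_{30+i}$ inside $P_i$ carry the trivial bundle, with $h^0=1$ on each. The global sections of $\Ncal$ are then the tuples of sections matching along all these curves; since the surjectivity of the relevant restriction maps $H^0(\Sigma'\text{-part})\to H^0(\PP^1,\Ocal(2))$ and $H^0(T'\text{-part})\to H^0(\PP^1,\Ocal(2))$ holds (the first because $h^1(\Ocal_{\Sigma'})=0$ on the $K3$, the second by Noether's theorem as in Lemma~\ref{lem:acca}), a dimension count $4+4+6\cdot 1 -\big(\text{gluing conditions}\big)$ gives exactly $5$; and $p_g(Z')=5$ was already established, so $h^0(Z',\Ncal)=p_g(Z')=5$.

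Next I would show $\phi_\Ncal$ is a morphism. By the previous paragraph $\Ncal$ is base point free on each of $\Sigma'$, $T'$ and the $P_i$ separately; the only possible base points are on the double curves, and there base-point freeness follows because the restriction maps to $H^0$ of the double curves are surjective (so any base point of $\Ncal$ on a double curve would be a common base point of $\vert\Ocal_{\PP^1}(2)\vert$, of which there are none, or a common base point of the trivial system on a line, likewise none). Hence $\phi_\Ncal\colon Z'\to\PP^4$ is a morphism. Since $\Ncal$ restricts trivially to each plane $P_i$ and to each $(-2)$-curve $N_1,\dots,N_{10},M_1,\dots,M_{30}$, the morphism $\phi_\Ncal$ contracts all these curves; contracting the $(-2)$-curves is precisely the passage $Z'\to Z$, and contracting the planes $P_i$ identifies the two nodes on $\Sigma$ and on $Q$ lying over $x_i$. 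Thus $\phi_\Ncal$ factors through a morphism $\phi\colon Z\to\PP^4$.

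Finally I would identify the image $\bar Z$. The component $\Sigma'$ maps by $\vert 2\Gamma+\sum N_{10+i}\vert$ to the quartic Kummer surface $\Sigma\subset\PP^3$; the component $T'$ maps by $\vert\omega_{T'}\vert$ to the smooth quadric $Q\cong\PP^1\times\PP^1\subset\PP^3$, with $\phi$ of degree $2$ there (the double quadric). The $\PP^4$ containing $\bar Z$ is spanned by these: I would argue that the $4$-dimensional space of sections coming from $\Sigma'$ and the $4$-dimensional space coming from $T'$ agree precisely on the $3$-dimensional space of sections restricting to $\vert\Ocal_\Gamma(2)\vert$, so $\Sigma$ spans a hyperplane $\Pi$, $Q$ spans a hyperplane $\Pi'$, and $\Pi\cap\Pi'$ is the $\PP^2$ in which the common conic $\Gamma$ lies; $\Gamma$ is a hyperplane section of each of $\Sigma$ and $Q$, hence a plane section of $Q$, i.e.\ a conic, and it passes through the six nodes of $\Sigma$ that lay on $\Gamma\subset\Sigma'$ (the images of $N_{11},\dots,N_{16}$). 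This gives exactly the stated description of $\bar Z$.

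\medskip

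The main obstacle I anticipate is the careful bookkeeping in the Mayer--Vietoris computation: one must correctly identify the restriction of $\Ncal$ to each double curve $N_{10+i}$, $M_{30+i}\subset P_i$ (and check these are trivial, consistently with the two lines meeting at $x_i$ and with $\Gamma,\Gamma'$ passing through the same six points), verify that the triple-point / $d$-semistability constraints of Lemma~\ref{dstab} are compatible with the gluing data, and confirm that no unexpected coboundary contributions appear --- in other words, that the naive dimension count $4+4+6-(3+3+6\cdot 1)+(\text{corrections})$ really yields $5$ and matches the already-known value $p_g(Z')=5$. Once the bundle $\Ncal$ is pinned down on every stratum, the morphism property and the geometric identification of $\bar Z$ follow formally from the base-point-freeness of $\vert\Ocal_{\PP^1}(2)\vert$ and from the explicit models (Kummer quartic, smooth quadric) of the two big components.
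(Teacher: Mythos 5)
Your argument is correct and is essentially the paper's own proof: the paper computes $h^{0}(Z',\Ncal)=4+4-3=5$ from the cartesian square of restriction maps from $H^{0}(\Sigma',\Ocal_{\Sigma'}(2\Gamma+\sum_{i=1}^{6}N_{10+i}))$ and $H^{0}(T',\omega_{T'})$ to $H^{0}(\Gamma,\Ocal_{\mathbb P^{1}}(2))$ (the planes $P_i$ contributing nothing net, as you verify), and then deduces base-point-freeness and the description of the image exactly as in Lemma \ref{lem:acca}. The one quibble is your justification of the surjectivity of the restriction from $\Sigma'$ to $\Gamma$: this does not follow from $h^{1}(\Ocal_{\Sigma'})=0$ alone (one needs $h^{1}(\Sigma',\Ocal_{\Sigma'}(\Gamma+\sum_{i=1}^{6}N_{10+i}))=0$, or more simply the observation that hyperplanes of $\mathbb P^{3}$ cut out the complete system $\vert\Ocal_{\mathbb P^{1}}(2)\vert$ on the plane conic $\Gamma\subset\Sigma$), but the claim itself is true and the rest of the bookkeeping is sound.
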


\begin{proof} We have a cartesian diagram
\[
\xymatrix@=15pt{
H^0(Z,\mathcal N) \ar[d]_{s_2} &  \ar[rr]_{s_1}&&& \quad H^{0}(\Sigma',\Ocal_{\Sigma'}(2\Gamma+\sum_{i=1}^ 6N_{10+i}) )  \ar[d]^{r_1} \\
H^ 0(T',\omega_{T'})& \ar[rr]_{r_2} &&& \quad H^ 0(\Gamma, \Ocal_{\Gamma}\otimes \mathcal N)\cong H^ 0(\mathbb P^ 1, \Ocal_{\mathbb P^ 1}(2))
}
\]
where $r_1,\, r_2$ are restriction maps, both surjective. The proof goes as the one of Lemma \ref {lem:acca}.\end{proof}

\begin{lem}\label{lem:zeta2}  Notation as in  Lemma \ref {lem:zeta1}. Then $\bar Z$ is the complete intersection of the quadric $\Pi\cup \Pi'$ and of a quartic hypersurface. \end{lem}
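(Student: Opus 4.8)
The plan is to identify $\bar Z$ as a divisor of low degree inside the reducible quadric $\Pi\cup\Pi'$ and then check, by a dimension count on sections, that the divisor cutting $\bar Z$ out of that quadric has degree $4$. First I would record the numerical data we already have: $\bar Z$ is a surface with $p_g=5$, $\chi=6$, $K^2=8$, embedded in $\mathbb P^4$ by $\phi_\Ncal$, and by Lemma \ref{lem:zeta1} it decomposes as $\Sigma\cup Q$ with $\Sigma\subset\Pi$ a quartic Kummer surface and $Q\subset\Pi'$ a (doubled) quadric, meeting along the conic $\Gamma$. In particular $\deg\bar Z=\deg\Sigma+\deg Q=4+2\cdot 1=\dots$ — wait, I must be careful: $Q$ appears with multiplicity $2$ in the image, so as a cycle $\bar Z$ has degree $4+2=6$, but the \emph{reduced} surface $\Sigma\cup Q_{\mathrm{red}}$ has degree $4+2=6$ as well since $Q_{\mathrm{red}}$ is already a quadric surface in a $\mathbb P^3$. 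So $\deg\bar Z=8$? No: the quadric surface $Q\cong\mathbb P^1\times\mathbb P^1$ sitting in the hyperplane $\Pi'$ has degree $2$ in $\mathbb P^4$, and $\Sigma$ has degree $4$; counted with the multiplicity $2$ coming from $\phi_\Ncal$ being $2$-to-$1$ onto $Q$, the scheme-theoretic image cycle has degree $4+2\cdot 2=8=K^2$, consistent with the canonical image having degree $K^2$. This is the degree I would use: $\bar Z$, as the scheme-theoretic image, is a degree-$8$ surface contained in the quadric threefold-section $\Pi\cup\Pi'$, hence $\bar Z = (\Pi\cup\Pi')\cap F$ for some hypersurface $F$, provided $\bar Z$ is cut out by forms of a single degree $d$ on that quadric, and then $2\cdot d = 8$ gives $d=4$.

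The key steps, in order. (1) Show $\bar Z$ is not contained in any hyperplane: since $\phi_\Ncal$ is the map given by the full $5$-dimensional space $H^0(Z,\Ncal)$ and $p_g(Z')=5$, the image spans $\mathbb P^4$, so $\bar Z$ lies on no hyperplane but does lie on the rank-$\le 2$ quadric $\Pi\cup\Pi'$; one checks this quadric is the \emph{unique} quadric through $\bar Z$ by the cartesian-square description of $H^0(Z,\Ncal)$ in Lemma \ref{lem:zeta1} (the only quadratic relation among the five coordinates is the product of the two linear forms cutting out $\Pi$ and $\Pi'$). (2) Compute, via the same restriction-square bookkeeping used in Lemmas \ref{lem:acca} and \ref{lem:zeta1}, the dimension of $H^0(\bar Z,\Ocal_{\bar Z}(4))$ — equivalently the image of $\sym^4 H^0(Z,\Ncal)\to H^0(Z,\Ncal^{\otimes 4})$ — restricted to each component: on $\Sigma$ the quartic forms restrict to $\vert 4\Gamma+2\sum N_{10+i}\vert$, a well-understood complete linear system on the $K3$, and on $Q$ they restrict to quartic forms on $\mathbb P^1\times\mathbb P^1$, which is $H^0(\Ocal_{\mathbb P^1\times\mathbb P^1}(4,4))$ or a sublinear system thereof. (3) Compare this with $h^0(\mathbb P^4,\Ocal(4)) - h^0(\mathbb P^4,\Ocal(2))$, the expected number of quartics modulo multiples of the defining quadric, and conclude that the quartics through $\bar Z$ form exactly a one-dimensional system modulo the quadric's multiples; pick one such quartic $F$. (4) Finally, check that $(\Pi\cup\Pi')\cap F$ is reduced of pure dimension $2$ and equals $\bar Z$ set-theoretically, hence — both being degree-$8$ surfaces, the intersection having no embedded or lower-dimensional components by a Bertini-type or direct local argument at $\Gamma$ — they agree scheme-theoretically. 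A cleaner route for step (4): $\bar Z$ is Cohen–Macaulay (being a flat limit / the canonical image of a Gorenstein surface), the complete intersection $(\Pi\cup\Pi')\cap F$ is Cohen–Macaulay, $\bar Z\subseteq (\Pi\cup\Pi')\cap F$, and both have the same Hilbert polynomial (degree $8$, same $\chi$), so the inclusion is an equality.

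The main obstacle I expect is step (2)–(3): pinning down \emph{exactly} which quartic forms on $\mathbb P^4$ vanish on $\bar Z$, i.e. showing the linear system of quartics through $\bar Z$ modulo the ideal generated by the quadric is precisely one-dimensional and not larger. This requires knowing the restriction maps $\sym^4 H^0(Z,\Ncal)\to H^0(\Sigma,\Ocal(4\Gamma+2\textstyle\sum N_{10+i}))$ and $\to H^0(Q,\Ocal(4,4))$ precisely, including their images and the compatibility along $\Gamma$; the $K3$ side is governed by the projective normality of the quartic Kummer surface (standard), but controlling the gluing along $\Gamma$ and ruling out extra quartics through the doubled quadric $Q$ is the delicate point. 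Once the count matches the complete-intersection expectation, the identification follows formally from equality of Hilbert polynomials and Cohen–Macaulayness, so there is no further obstacle there.
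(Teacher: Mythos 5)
Your strategy (show $\Pi\cup\Pi'$ is the unique quadric through $\bar Z$, count quartics through $\bar Z$ modulo that quadric, then identify $\bar Z$ with the complete intersection by a Hilbert-polynomial/Cohen--Macaulay comparison) is genuinely different from the paper's, which is a short explicit construction: choose coordinates with $\Pi=\{x_0=0\}$, $\Pi'=\{x_1=0\}$, write $G=x_0^2+x_0q_1+q_2$ for the quadric $Q$ and $F$ for the Kummer quartic, and exhibit a quartic $H$ with $H|_{\Pi}=F$ and $H|_{\Pi'}=G^2$. But your proposal has a genuine gap, and it sits exactly where you yourself flag ``the main obstacle'': steps (2)--(3) are never carried out, and they cannot be reduced to formal bookkeeping. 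The existence of even one quartic hypersurface cutting $\bar Z$ out of $\Pi\cup\Pi'$ is not guaranteed by any dimension count: such a quartic, restricted to the plane $\Pi\cap\Pi'$, must simultaneously equal a nonzero multiple of $F(0,x_2,x_3,x_4)$ (from vanishing on $\Sigma$) and of $q_2^2$ (from vanishing doubly on $Q$). So the lemma is true only because the plane of the conic $\Gamma$ is a \emph{trope} of the Kummer surface, i.e.\ $\Sigma$ is tangent to $\Pi\cap\Pi'$ along $\Gamma$ and its plane section there is the double conic $q_2^2=0$. This classical fact is the entire content of the lemma; the paper's ``we may assume $F(0,x_2,x_3,x_4)=q_2^2$'' encodes it, and once it is in hand the quartic $H=x_0^4+2x_0^3q_1+x_0^2(q_1^2+2q_2)+2x_0q_1q_2+F$ is written down directly. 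Your proposal never identifies this condition, and a naive count of quartics through $\Sigma\cup Q$ without it would simply come out wrong (the only quartics found would be those containing one of the two hyperplanes).

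Two secondary problems. First, in your ``cleaner route'' for step (4), equality of degree and of $\chi$ does not determine the Hilbert polynomial of a surface: you also need the linear coefficient (the sectional arithmetic genus), so the comparison as stated is incomplete. Second, you would need to justify that the scheme-theoretic image $\bar Z$, with its non-reduced structure along $Q$, is Cohen--Macaulay with the \emph{same} multiplicity-two structure as the component of the complete intersection along $Q$; ``canonical image of a Gorenstein surface'' does not give this for free. None of these are fatal to the strategy, but as written the proof does not go through, whereas the explicit construction avoids all of them.
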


\begin{proof} We may choose homogeneous coordinates $(x_0:\ldots:x_4)$ in $\mathbb P^ 4$ so that $\Pi$ has equation $x_0=0$ and $\Pi'$  equation $x_1=0$. Suppose that the equation of $\Sigma$ in $\Pi$ is $F(x_1,\ldots,x_4)=0$ and the equation of $Q$ in $\Pi'$ is $G(x_0,x_2,\ldots, x_4)=0$. We may write
\[
G(x_0,x_2,\ldots, x_4)=x_0^ 2+x_0q_1(x_2,x_3,x_4)+q_2(x_2,x_3,x_4)
\]
where $q_1,q_2$ are homogeneous polynomials of degree given by the index.  We may assume that
\[
F(0,x_2,x_3,x_4)=G^2(0,x_2,x_3,x_4)=q^ 2_2(x_2,x_3,x_4).
\]
Consider the homogeneous polynomial of degree $4$
\[
H(x_0,\ldots,x_4)=\sum_{i=0}^ 4 x_0^{4-i} f_i(x_1,\ldots,x_4)
\]
where
\[
f_0=1,\,\, f_1= 2q_1, \,\, f_2=q_1^ 2+2q_2,\,\,  f_3=2q_1q_2,\,\, f_4=F.
\]
The quartic $H=0$ intersects $\Pi$ in $\Sigma$ and $\Pi'$ in the quartic with equation
\[
x_0^ 4+2x_0^ 3q_1+x_0^ 2(q_1^ 2+2q_2)+2x_0q_1q_2+q_2^ 2=0
\]
which is the double quadric $G^ 2=0$. The assertion follows.\end{proof}

Lemma \ref {lem:zeta2} shows that the 40--nodal (and otherwise normal crossings) surface $Z$ sits on the boundary of a partial compactification $\mathfrak M$ of the moduli space of complete intersections of a quadric and a quartic in $\PP^{4}$, which are canonical surfaces with invariants $p_{g}=5,\, \chi=6,\, K^{2}=8$. 
One has  $\dim(\mathfrak M)=10\chi-2K^{2}=44$ moduli. 
In $\mathfrak M$  each node imposes, as well known, one condition at most, and therefore $Z$ is contained in an irreducible, locally closed  subset $\mathcal Z\subset \mathfrak M$ of dimension $\dim(\Zcal)\ge 4$
of $40$--nodal surfaces.

\begin{lem}\label{lem:smooth} The general surface in $\mathcal Z$ has 40 nodes and no other singularity. \end{lem}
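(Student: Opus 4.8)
The plan is to show that the general member of the family $\mathcal Z$ has no singularities beyond the prescribed $40$ nodes by combining a dimension count with a careful analysis of the ``source'' surface $Z$ together with a semicontinuity/specialization argument. First I would recall that, by Lemma \ref{lem:zeta2}, the surface $Z$ lies on the boundary of the partial compactification $\mathfrak M$, and that the local-to-global obstruction to smoothing each node is one condition, so $\mathcal Z$ is irreducible of dimension $\ge 4$ inside the $44$-dimensional space $\mathfrak M$. The key point is to prove that $\dim(\mathcal Z)$ is in fact exactly $4$, so that $\mathcal Z$ is not ``too big'' to be accounted for by the $40$ node conditions being independent, and then to use the standard fact that in a family of surfaces acquiring a prescribed number of nodes, the locus where extra singularities (or worse singularities than $A_1$) appear is a proper closed subset.

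The heart of the argument is therefore a parameter count matching $\dim(\mathcal Z)=4$ from below and from above. From below we already have $\ge 4$. From above: the general surface $S$ in a smoothing of $Z$ is (by the earlier sections) a Schoen surface, whose moduli space is $4$-dimensional by Theorem \ref{thm:The Schoen surface}; and the double cover construction (via the even nodes, as set up in \S\ref{sec:diff}) should give a dominant, generically finite correspondence between $\mathcal Z$ and the moduli of Schoen surfaces. Concretely, I would argue that a general $40$-nodal complete intersection $Y$ in $\mathcal Z$ has even nodes (this is proved later in the paper, but may be invoked structurally here, or one argues that evenness is a closed condition satisfied by $Z$ and then deforms), hence admits a canonical double cover which is a smooth surface of general type with the Schoen invariants $p_g=5,\chi=2,K^2=16$; since there are only $4$ moduli of such surfaces and the double cover recovers $Y$ (as the canonical image), we get $\dim(\mathcal Z)\le 4$. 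Combined with the lower bound, $\dim(\mathcal Z)=4$ and, in particular, the $40$ nodes of $Z$ impose independent conditions on $\mathfrak M$.

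Once $\dim(\mathcal Z)=4$ is established, I would finish as follows. The variety $\mathcal Z$ is defined, inside the irreducible family $\mathfrak M$, as (a component of) the locus of complete intersections with at least $40$ nodes; consider the open subset $\mathcal Z^\circ\subseteq\mathcal Z$ of surfaces whose singular locus is exactly $40$ points each of type $A_1$. The complement $\mathcal Z\setminus\mathcal Z^\circ$ is closed, and it is proper provided $\mathcal Z^\circ\ne\emptyset$. To see $\mathcal Z^\circ\ne\emptyset$ I would deform $Z$ itself: $Z$ has $40$ nodes plus normal crossing singularities along the curve $R$, but those normal crossings are smoothable (the triple point formula \eqref{eq:tpf} holds, as noted in \S\ref{sec:diff}), and a first-order computation shows that one can smooth the normal-crossing locus while keeping the $40$ nodes rigid — i.e.\ there is an arc in $\mathfrak M$ through $Z$ whose general member has precisely $40$ nodes. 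This uses that the $40$ nodes sit in the smooth locus of $Z$ away from $R$, so the local deformation problems decouple: near each of the $40$ nodes one may keep the node, and near $R$ one smooths the crossing, and the obstruction spaces add up, giving the existence of such a surface in $\mathcal Z$ of dimension $4$. Hence $\mathcal Z^\circ$ is a nonempty open subset of the irreducible variety $\mathcal Z$, so the general surface of $\mathcal Z$ lies in $\mathcal Z^\circ$, i.e.\ it has $40$ nodes and no other singularity.

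The main obstacle I anticipate is making the dimension count genuinely rigorous, in particular the upper bound $\dim(\mathcal Z)\le 4$: one needs to know that the assignment ``$Y\mapsto$ its canonical double cover'' is well defined on a dense open subset of $\mathcal Z$ (which requires the $40$ nodes to be even, a fact logically proved later) and that this map has finite fibres onto the $4$-dimensional moduli of Schoen surfaces. An alternative route that avoids the logical dependence is to argue purely deformation-theoretically: compute that the equisingular deformation space of $Z$ (nodes preserved, normal crossings smoothed) has dimension exactly $4$, using \eqref{eq:pg}, \eqref{eq:k2}, \eqref{eq:chi} and the $T^1$-sheaf description in Lemma \ref{dstab}, so that $\mathcal Z$ — being sandwiched between this $4$-dimensional deformation space and the a priori bound $\ge 4$ — is $4$-dimensional and its general member is as claimed. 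I would present the cleaner of these two once the bookkeeping is checked; the routine but slightly delicate part is verifying that no extra obstruction forces a singularity worse than a node on the general member, which follows from upper-semicontinuity of the Tjurina number along $\mathcal Z$ together with the explicit deformation of $Z$ exhibited above.
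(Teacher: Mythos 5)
Your overall strategy (exhibit one arc in $\mathcal Z$ through $Z$ whose general member has exactly the $40$ nodes, then invoke irreducibility of $\mathcal Z$ and semicontinuity) has the right shape, but the step that actually produces such an arc is missing in both of your routes. The first route is genuinely circular: the evenness of the $40$ nodes, the fact that the double cover is a Schoen surface, and the identification of $Y$ with the canonical image of that cover are all established in the paper \emph{after} and \emph{using} this lemma (the family $g:\mathcal Y\to\mathbb D$ used in Lemma \ref{lem:even} is built from the smoothing constructed here), so you cannot use the $4$ moduli of Schoen surfaces to bound $\dim\mathcal Z$ from above at this stage. Moreover, as you yourself note, $\dim\mathcal Z=4$ by itself would not preclude extra singularities on the general member, so the dimension count is not the heart of the matter anyway.

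The second route contains the real gap: the phrase ``the local deformation problems decouple \dots and the obstruction spaces add up'' is precisely the assertion that there exists a global first-order deformation of $Z$ which is nonzero on the normal-crossing component of the $T^1$-sheaf, zero at the $40$ nodes, and integrable; none of this is automatic, since the map $\mathrm{Ext}^1_{\mathcal O_Z}(\Omega^1_Z,\mathcal O_Z)\to H^0(Z,\mathcal Ext^1)$ need not be surjective. The paper's mechanism is different and avoids constructing anything by hand: the locus $\mathcal Z'\subset\mathcal Z$ of reducible surfaces (locally trivial deformations of $Z$) has only $3$ moduli, namely those of $C$, while $\dim\mathcal Z\ge 4$, so a general arc in $\mathcal Z$ is \emph{not} locally trivial. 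Its tangent vector lies in the kernel $\mathbf K$ of the projection $\mathrm{Ext}^1_{\mathcal O_Z}(\Omega^1_Z,\mathcal O_Z)\to\mathbb C^{40}$ (nodes preserved) but not in $H^1(Z,\Theta_Z)$, hence maps to a nonzero element of $H^0(D,\mathcal O_D)\cong\mathbb C$; since the $T^1$-sheaf restricted to $D$ is $\mathcal O_D$ and $D$ is connected, a nonzero constant section is nowhere vanishing, so the arc smooths the \emph{entire} double curve at first order, and Friedman's Proposition (2.5) then upgrades this to smoothness of the total space away from the curve traced by the $40$ nodes. The two inputs you would need to add to make your sketch rigorous are exactly these: the comparison $3<4$ with the locally trivial locus, and the ``all-or-nothing'' triviality of the $T^1$-sheaf along the connected curve $D$.
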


\begin{proof} The reducible surfaces $Z$ depend
on $3$ moduli (i.e. the moduli of $C$). So they fill up a proper subvariety $\Zcal'$ of $\Zcal$. The local to global Ext spectral sequence gives the exact sequence
\begin{equation}\label{eq:ltg}
0\to H^ 1(Z,\Theta_Z)\to {\rm Ext}^ 1_{\Ocal_Z}(\Omega^ 1_Z,\Ocal_Z)\to H^ 0(Z, {\mathcal E}xt^ 1_{\Ocal_Z}(\Omega_V,\Ocal_Z))\cong \mathbb C\oplus \mathbb C^ {40}.
\end{equation}
To explain the last isomorphism, note that ${\mathcal E}xt^ 1_{\Ocal_Z}(\Omega_V,\Ocal_Z)$ is supported at the singular locus of $Z$, which consists of the double curve $D:=\Gamma+\sum_{i=1}^ 6(N_{10+i}+M_{10+i})$ plus the 40 nodes $n_1,\ldots, n_{10}, m_1,\ldots, m_{30}$. By Lemma \ref {dstab} (which clearly applies to this case, though $Z$ is singular off the double curve), one has 
\[
{\mathcal E}xt^ 1_{\Ocal_Z}(\Omega_V,\Ocal_Z)\otimes \Ocal_D\cong \Ocal_D.
\]
Moreover 
\[
{\mathcal E}xt^ 1_{\Ocal_Z}(\Omega_V,\Ocal_Z)\otimes \Ocal_{z}\cong \Ocal_z,\,\, {\rm for}\,\ z=n_1,\ldots, n_{10}, m_1,\ldots, m_{30}.
\]
Recall that the vector spaces in \eqref {eq:ltg} have the following meaning:\\
\begin{inparaenum}[$\rhd$]
\item  $H^ 1(Z,\Theta_Z)$ is the tangent space to \emph{locally trivial} deformations of $Z$;\\
\item  ${\rm Ext}^ 1_{\Ocal_Z}(\Omega^ 1_Z,\Ocal_Z)$ is the tangent space of all deformations of $Z$.
\end{inparaenum}
Consider the kernel ${\bf K}$ of the projection 
\[{\rm Ext}^ 1_{\Ocal_Z}(\Omega^ 1_Z,\Ocal_Z)\to H^ 0(Z, \bigoplus_{i=1}^ {10}\Ocal_{n_i}\oplus \bigoplus_{i=1}^ {30}\Ocal_{m_i})\cong \mathbb C^ {40}\]
which is the tangent space to deformations of $Z$ keeping the 40 nodes $n_1,\ldots, n_{10}, m_1,\ldots, m_{30}$, i.e. it is the tangent space to $Z$ in $\Zcal$. The sequence \eqref {eq:ltg} can be replaced by
\[
0\to H^ 1(Z,\Theta_Z)\to\mathbf K \to H^ 0(D, \Ocal_D)\cong \mathbb C.
\]

Let us take now a deformation $f: \Xcal \to \mathbb D$ of $Z$ inside $\Zcal$ parametrized by a disc $\mathbb D$, which is not tangent to $\Zcal'$, in particular it is not a locally trivial deformation of $Z$. Then the tangent vector  to this deformation
is an element in $\mathbf K$ not in $H^ 1(Z,\Theta_Z)$, hence it maps to a non--zero element in $H^ 0(D, \Ocal_D)$. By a (suitable version of) \cite [Proposition (2.5)]{Friedman}, one may assume (up to shrinking $\mathbb D$) that $\Xcal$ is smooth off the curve $A$ described by the deformations of the 40 nodes. The assertion follows.
\end{proof}

Let us consider the desingularization $\mathcal Y\to \Xcal$, which is obtained by blowing--up $\Xcal$ along the singular curve $A$ (see proof of Lemma \ref{lem:smooth}). By composing with $f$ we have a new family $g:\mathcal Y\to \mathbb D$ which is a smoothing of $Z'$. We denote by $E$ the exceptional divisor over $A$. It intersects the general surface $Y_t$ of the family, for $t\neq 0$, in the $(-2)$--curves  deforming $N_1,\ldots, N_{10}, M_1,\ldots, M_{30}$ on $Z'$.

\begin{lem}\label{lem:even} The 40 nodes on the general surface of $\Zcal$ are even.
\end{lem}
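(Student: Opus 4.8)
The goal is to show that the 40 nodes on the general surface $Y$ of $\mathcal Z$ are even in the sense of \S\ref{sec:diff}, i.e. that the divisor $N_1+\cdots+N_{10}+M_1+\cdots+M_{30}$ cut out by the exceptional divisor $E$ on $Y_t$ is divisible by $2$ in $\operatorname{Pic}(Y_t)$. The natural strategy is to establish the divisibility on the central fibre $Z'$ first, using the very explicit geometry built in Lemmas \ref{lem:zeta1} and \ref{lem:zeta2}, and then to propagate it to the nearby fibres $Y_t$ using the family $g:\mathcal Y\to \mathbb D$ together with Lemma \ref{lem:ext}.

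\textbf{Step 1: divisibility on $Z'$.} First I would produce the relevant square root on the central fibre $Z'=\Sigma'\cup T'\cup P_1\cup\cdots\cup P_6$. On $\Sigma'$ (a $K3$) the divisor $N_1+\cdots+N_{16}$ is the branch locus of the double cover of $\Sigma'$ coming from the minimal resolution of the Kummer surface $A/\pm$; classically this is an even set of nodes on the Kummer surface (it is the pullback of a divisor of the form "$\tfrac12(\text{tropes})$"), so $\Ocal_{\Sigma'}(N_1+\cdots+N_{16})$ is $2$-divisible, and hence so is $\Ocal_{\Sigma'}(N_1+\cdots+N_{10})$ after correcting by the $2$-divisible class coming from $N_{11}+\cdots+N_{16}$ together with the class of $\Gamma$ on $\Sigma'$ — here I would use that on the Kummer surface the $6$ nodes $N_{11},\dots,N_{16}$ lying on the conic $\Gamma$ form, together with $\Gamma$, a configuration coming from a trope, so that $N_{11}+\cdots+N_{16}$ is linearly equivalent mod $2$ to something supported on $\Gamma$. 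On $T'$, the surface is the minimal resolution of $(C\times C)/(\mathfrak i\times\mathfrak i)$, a double cover of the quadric $Q$; the branch locus accounts for $M_1+\cdots+M_{36}$ (the $36$ nodes are the images of the $36$ Weierstrass-point pairs), and the standard double-cover-of-$\mathbb P^1\times\mathbb P^1$ description realizes $M_1+\cdots+M_{36}$ as $2$-divisible, again correcting by the $6$ curves $M_{31},\dots,M_{36}$ meeting $\Gamma'$ and by the class of $\Gamma'$. On each plane $P_i$ there is nothing to do (no $(-2)$-curves contracted there, and the two lines $N_{10+i},M_{30+i}$ carry the trivial bundle after the twist). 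The key point is to choose these square roots on $\Sigma'$, $T'$, and the $P_i$ \emph{compatibly along the double curves}: along $\Gamma\subset\Sigma'\cap P_i$ and $\Gamma'\subset T'\cap P_i$ the restrictions of the candidate square root must agree, and the leftover curves $N_{11},\dots,N_{16}$, $M_{31},\dots,M_{36}$ must be absorbed so that the glued line bundle on $Z'$ is exactly a square root of $\Ocal_{Z'}(N_1+\cdots+N_{10}+M_1+\cdots+M_{30})$ — since the double curve $D$ of $Z'$ is a tree of rational curves (by the last part of Lemma \ref{dstab}), $\operatorname{Pic}$ of $Z'$ is the fibre product of the $\operatorname{Pic}$'s of the components over the $\operatorname{Pic}$'s of the double curves, so this gluing is the only obstruction and it is checked component-wise.

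\textbf{Step 2: extending to the general fibre.} Once $\Ocal_{Z'}(N_1+\cdots+M_{30})=\Mcal_0^{\otimes 2}$ on $Z'=Y_0$, I would apply Lemma \ref{lem:ext} to the family $g:\mathcal Y\to\mathbb D$: the exceptional divisor $E\subset\mathcal Y$ is an effective Cartier divisor with $\Ocal_{\mathcal Y}(E)|_{Y_0}=\Ocal_{Z'}(N_1+\cdots+M_{30})$ (up to the part of $E$ lying over components of the central singular curve, which I would track carefully), so $\Lcal:=\Ocal_{\mathcal Y}(E)$ is a line bundle on the total space whose restriction to $Y_0$ admits a square root; Lemma \ref{lem:ext} then gives, after shrinking $\mathbb D$, a square root $\Mcal$ of $\Lcal$ on all of $\mathcal Y$, and restricting to $Y_t$ shows that the $(-2)$-configuration deforming $N_1+\cdots+M_{30}$ on $Y_t$ is $2$-divisible. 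Since contracting those $(-2)$-curves on $Y_t$ produces the general $40$-nodal surface of $\mathcal Z$, and $2$-divisibility of the sum of the exceptional $(-2)$-curves is exactly the definition of the nodes being even (as in \S\ref{sec:diff} and the diagram there), this finishes the proof.

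\textbf{Main obstacle.} The delicate part is Step 1, and within it the compatibility of the chosen square roots across the double curves together with the bookkeeping of the "extra" curves $N_{11},\dots,N_{16}$ and $M_{31},\dots,M_{36}$ and the curves $\Gamma,\Gamma'$. One must verify that the classical evenness of the $16$ nodes of a Kummer surface and of the $36$ nodes of $(C\times C)/(\mathfrak i\times\mathfrak i)$ can be arranged with representatives whose restrictions to the double curve $R\cong\Gamma\cong\Gamma'$ match — equivalently, that the mod-$2$ class being glued is trivial on each rational double curve — which is where the tree condition for $D$ (Lemma \ref{dstab}) does the real work. The deformation step (Step 2) is then essentially formal given Lemma \ref{lem:ext}, modulo correctly identifying $\Ocal_{\mathcal Y}(E)|_{Y_0}$ with the divisor on $Z'$ we have trivialized.
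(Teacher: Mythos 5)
Your Step 2 is exactly the paper's mechanism (Lemma \ref{lem:ext} applied to $g:\mathcal Y\to\mathbb D$), but Step 1 contains a genuine gap, and it sits precisely where the one nontrivial idea of the actual proof lives. The bundle you propose to halve on the central fibre, namely $\Ocal_{\mathcal Y}(E)|_{Z'}$, restricts on $\Sigma'$ to $\Ocal_{\Sigma'}(N_1+\cdots+N_{10})$, and this class is \emph{not} divisible by $2$ in ${\rm Pic}(\Sigma')$. Indeed $\Gamma$ is a trope of the Kummer surface and the trope relation reads $N_{11}+\cdots+N_{16}=H-2\Gamma$, where $H=2\Gamma+\sum_{i=1}^6N_{10+i}$ is the quartic polarization; combined with the evenness of all $16$ nodes this gives $N_1+\cdots+N_{10}\equiv H\pmod 2$, and $H$ is not $2$-divisible on a $K3$ (a half would have odd self-intersection $1$). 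So your claim that $N_{11}+\cdots+N_{16}$ is congruent mod $2$ to something supported on $\Gamma$ is false: it is congruent to the ample class $H$. Since ${\rm Pic}$ of a $K3$ is torsion-free, no gluing along the (tree of) double curves can rescue a candidate bundle whose restriction to one component already fails to be a square.

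The missing idea is to replace $\Ocal_{\mathcal Y}(E)$ by $\Ocal_{\mathcal Y}(E+P)$ with $P=\sum_{i=1}^6P_i$ the sum of the six planes, which are \emph{components of the central fibre}: this changes nothing on $Y_t$ for $t\neq0$, since $P\cap Y_t=\emptyset$, so the conclusion on the general fibre is unaffected, while on the central fibre it restricts to $\Ocal_{\Sigma'}(N_1+\cdots+N_{16})$ on $\Sigma'$ (because $P_i\cap\Sigma'=N_{10+i}$), to $\Ocal_{T'}(M_1+\cdots+M_{36})$ on $T'$, and to $\Ocal_{\mathbb P^2}(-2)$ on each $P_i$. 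Each of these is $2$-divisible, by the classical evenness of the $16$ Kummer nodes and of the $36$ nodes of $(C\times C)/(\mathfrak i\times\mathfrak i)$ that you correctly invoke, and the halves glue to a square root $\Mcal_0$ of $\Ocal_{\mathcal Y}(E+P)|_{Z'}$ (this is where your tree/fibre-product remark is legitimately used). Your Step 2 then applies verbatim to $\Lcal=\Ocal_{\mathcal Y}(E+P)$. Without the twist by $P$, the central-fibre divisibility you need as input to Lemma \ref{lem:ext} simply does not hold.
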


\begin{proof} Consider the divisor $E+P$ on $\mathcal Y$, where $P=\sum_{i=1}^ 6 P_i$ (we abuse notation here and denote by $P_i$ its strict transform on $\mathcal Y$, for $1\leqslant i\leqslant 6$).  We note that $\Ocal_{Z'}(E+P)$ is divisible by $2$ in ${\rm Pic}(Z')$. Indeed:\\
\begin{inparaenum}[(i)]
\item  \label {i} $\Ocal_{P_i}(E+P)\cong \Ocal_{\mathbb P^ 2}(-2)$, for $1\leqslant i\leqslant 6$;\\
\item \label {ii} $\Ocal_{\Sigma'}(E+P)\cong \Ocal_{\Sigma'}(N_1+\ldots+N_{16})$, which is divisible by two, because the 16 nodes of the Kummer surface are even;\\
\item \label {iii} $\Ocal_{T'}(E+P)\cong \Ocal_{T'}(M_1+\ldots+M_{36})$, which is also divisible by two, because the 36 nodes of $T'$ are even.\\
\end{inparaenum}
Moreover the halves of the bundles appearing in \eqref {i}, \eqref {ii} and \eqref {iii} above naturally glue to give a line bundle  $\Mcal_0$ on $Z'$ such that $\Mcal_0^ {\otimes 2}=\Ocal_{Z'}(E+P)$. Then, by Lemma \ref {lem:ext}, up to shrinking $\mathbb D$, we may assume that there is a line bundle $\Mcal$ on $\mathcal Y$ such that $\left. \Mcal\right|_{Z'}=\Mcal_0$ and $\Mcal^ {\otimes 2}=\Ocal_{\mathcal Y}(E+P)$. Since $\Ocal_{Y_t}(E+P)=\Ocal_{Y_t}(E)$ for $t\neq 0$, the assertion follows. \end{proof}

We are now in position to finish the:

\begin{proof}[Proof of Theorem \ref {thm:canonical}]
If $Y\in  \Zcal$ is the general surface, we can consider the double cover $\pi: S\to Y$ branched at the 40 nodes of $Y$. The surface $S$ is smooth and one computes its invariants to be
the same as for Schoen surfaces. Moreover $\pi^ *(\omega_Y)=\omega_S$. Next  we have to show that these surfaces are indeed
Schoen surfaces, i.e. they come from smoothings of surfaces of type $V$. 

The  proof of Lemma \ref {lem:even} shows that there is a commutative diagram 
\[
\xymatrix@=15pt{
\mathcal S'\ar[rrd]  \ar[rr]^{\pi'} &&\mathcal Y  \ar[d]^{g} \\
   &&\mathbb  D  }
\]
where $\pi'$ is a double cover branched along $E+P$. Note that $\mathcal S'$ is smooth, because so is $E+P$.
Let $E'+P'$ be the ramification divisor on $\mathcal S'$. Note also that the central fibre of $\mathcal S'$, which is a double cover of $Z'$, is nothing but $V$ plus 6 double planes $P'_i$ whose sum is $P'$, each covering one of the planes $P_i$, for $1\leqslant i\leqslant 6$.

Next we simultaneously contract $E+P$ and $E'+P'$, thus getting a new commutative diagram
\[
\xymatrix@=15pt{
&&\mathcal S' \ar[rrd]^ {\pi'}\ar[lld] \\
\mathcal S\ar[rrd]_h  \ar[rr]^\pi &&\mathcal  \Xcal' \ar[d]^{h} &&\ar[ll]  \mathcal Y \ar[lld]^{g}   \\
 &&\mathbb  D  }
\]
where:\\
\begin{inparaenum}[$\rhd$]
\item $\mathcal S'\to \mathcal S$ is the contraction of $E+P$ and $\mathcal S$ is smooth;\\
\item $\mathcal Y\to \mathcal X'$ is the contraction of $E'+P'$ and $\Xcal' $ has 6 hypernodes arising from the contraction of the six components of $P$ and a curve $A$ of double points coming from  the contraction of $E$;\\
\item $\pi: \mathcal S\to \mathcal Y$ is ramified along $A$  and along the 6 hypernodes;\\
\item the family $h: \mathcal S\to \mathbb D$ is a smoothing of the reducible surface $V$ as dictated by \ref {thm:The Schoen surface}. 
\end{inparaenum}

To finish our proof we have to show that in this way we do get all Schoen surfaces. By Theorem  \ref {thm:The Schoen surface}, Schoen surfaces depend 
on 4 moduli. On the other hand, the double covers we found here depend on $\dim(\mathcal Z)\ge 4$ moduli. This proves our assertion.
\end{proof}

\begin{rem} It is worth stressing that our approach does give an alternative way of proving the existence of Schoen surfaces and of finding their number of moduli. In other words, we do not need to rely on Theorem \ref {thm:The Schoen surface}. Indeed, the argument of the proof of Theorem \ref {thm:canonical}, shows that there are smoothings of $V$, depending on $\dim(\mathcal Z)\ge 4$ moduli.  It takes a few lines in \cite [\S 2] {Schoen} to compute the cohomology of $\Theta_V$ and one has $h^ 1(V, \Theta_V)=3$. Then we have the exact sequence
\[
0\to H^ 1(V,\Theta_V)\to {\rm Ext}^ 1_{\Ocal_V}(\Omega^ 1_V,\Ocal_V)\to H^ 0(V, {\mathcal E}xt^ 1_{\Ocal_V}(\Omega_V,\Ocal_V))\cong H^ 0(C, \Ocal_C)\cong \mathbb C\]
and we prove here that the rightmost map is non--zero. This shows  that $\dim({\rm Ext}^ 1_{\Ocal_V}(\Omega^ 1_V,\Ocal_V))=4$ and that the deformations in ${\rm Ext}^ 1_{\Ocal_V}(\Omega^ 1_V,\Ocal_V)$ are unobstructed.  In addition we have $\dim({\rm Ext}^ 1_{\Ocal_V}(\Omega^ 1_V,\Ocal_V))\ge \dim(\mathcal Z)\ge 4$,  which proves that  
$\dim(\mathcal Z)= 4$. \end{rem}

\bigskip{}

\begin{minipage}{13.0cm}

\parbox[t]{5.5cm}{Ciro Ciliberto\\
Dipartimento di Matematica,\\ II Universit\`a di Roma,\\
Via della Ricerca Scientifica, 00133, Roma, Italia\\
cilibert@axp.mat.uniroma2.it}

\vskip1.0truecm

\parbox[t]{6.5cm}{Margarida Mendes Lopes\\
Departamento de  Matem\'atica\\
Instituto Superior T\'ecnico\\
Universidade T{\'e}cnica de Lisboa\\
Av.~Rovisco Pais\\
1049-001 Lisboa, PORTUGAL\\
mmlopes@math.ist.utl.pt
 } \hfill
 \vskip1.0truecm
 
\parbox[t]{6.5cm}{Xavier Roulleau\\
Universit\'e de Poitiers,\\
Laboratoire de Math\'ematiques et Applications, UMR 7348 du CNRS,\\
 Boulevard Pierre et Marie Curie,\\
T\'el\'eport 2 - BP 30179,\\
86962 Futuroscope Chasseneuil,\\
France\\
xavier.roulleau@math.univ-poitiers.fr}

\end{minipage}


\begin{thebibliography}{16}

\bibitem{Amoros1} J. Amor\' os, {\em The Fundamental Group of K\"ahler Manifolds}, Ph. D. Thesis, Universitat de Barcelona, 1997.

\bibitem{Amoros} J. Amor\' os, M. Burger, K. Corlette, D. Kotschick,
D. Toledo, {}{\em  Fundamental groups of compact K\"ahler manifolds}
Mathematical Surveys and Monographs 44, AMS, Providence, RI, 1996.

\bibitem{Barja} M.A.~Barja, J.C.~Naranjo, G.P.~Pirola, {\em On the topological
index of irregular surfaces},  J. Algebraic Geom. 16
(2007), no. 3, 435--458.


\bibitem {Beauville1} A. Beauville, {\em L'application canonique pour les surfaces de type g\'en\'eral},  Invent. math. 55 (1979), 121--140.

\bibitem {Beauville2} A. Beauville, {\em A tale of two surfaces},  pre--print,  2012.



\bibitem{Calabri1} A. Calabri, C. Ciliberto, F. Flamini, and R. Miranda,
{} {\em On the $K^{2}$ of degenerations of surfaces and the multiple
point formula}, Annals of Math., 165 (2007), 335--395

\bibitem{Calabri2} A. Calabri, C. Ciliberto, F. Flamini, and R. Miranda, {\em On the geometric genus of reducible surfaces and degenerations of surfaces},  Annales Inst. Fourier, 57 (2) (2007), 491--516.

\bibitem{Catanese1} F.~Catanese, {\em Moduli and classification of irregular K\"ahler manifolds (and algebraic varieties, with Albanese general type fibrations)}, Inventiones Math., 104 (1991),
263--289.
\bibitem{causin} A.~Causin and G.~Pirola, {\em Hermitian matrices and cohomology of Kaehler varieties},
Manuscripta math.  {\bf 121} (2006),  157-168.




\bibitem {CPT} C. Ciliberto, R. Pardini, F. Tovena,  {\em Regular canonical covers},  Math. Nachr. 251 (2003), 19--27.

\bibitem {De} O.~Debarre, {\em Minimal cohomology classes and Jacobians}, J. Algebraic Geom. 4
no. 2 (1995), 321--335.

\bibitem{CF} F. Catanese, M. Franciosi,   {\em On varieties whose universal cover is a product of curves},  pre--print 2012,  math.arXiv:0812.4317

\bibitem{Friedman} R. Friedman, {\em Global smoothings of varieties
with normal crossings}, Ann. of Math. 118 (1983), 75--114.

\bibitem {GH} P. Griffiths, J. Harris,  {\em  On the Noether--Lefschetz theorem and some remarks
on codimension two cycles}, Math. Ann. 271, 31--51 (1985).

\bibitem {Matsushima} Y. Matsushima,  {\em  On the first Betti number of compact quotient spaces of higher--dimensional
symmetric spaces},  Ann. of Math. (2) 75, 1962, 312--330.
\bibitem {MP} M. Mendes Lopes, R. Pardini,  {\em `On surfaces with $p_g=2q-3$}, Advances in Geometry, {\bf  10} no.3 (2010), 549--555.

\bibitem {MPP} M. Mendes Lopes, R. Pardini, G.P. Pirola, {\em A characterization of the symmetric square of a curve}', 
Int. Math. Res. Notices (2012) Vol. 2012,  493--500.
\bibitem {MPP2} M. Mendes Lopes, R. Pardini, G.P. Pirola, {\em On surfaces of general type with $q=5$}, Ann. Sc. Norm. Super. Pisa Cl. Sci. (5)  Vol. XI (2012), 999--1007.
\bibitem {MPP3} M. Mendes Lopes, R. Pardini, G.P. Pirola,  {\em On the canonical map of surfaces with $q\ge 6$}, Sci. China Ser.  A, {\bf 54}  no. 8 (2011),  1725
--1739.

\bibitem{Moishezon} B.~Moishezon, M.~Teicher, {} {\em  Simply connected
algebraic surfaces of positive index} Invent. Math. 89 (1987),
no. 3, 601--643.

\bibitem{Moishezon 2} B.~Moishezon, A.~Robb, M.~Teicher, {} {\em On Galois covers
of Hirzebruch surfaces}, Math. Ann. 305 (1996), no. 3, 493--539.


\bibitem{SchoenExotic} C.~Schoen, {}  {\em Albanese standard and Albanese
Exotic varieties} J. of London Math. Soc. (2) 74 (2006) 304--320.

\bibitem{Schoen} C.~Schoen, {}  {\em A family of surfaces constructed from
genus $2$ curves}, Internat. J. Math. 18 (2007), no. 5, 585--612.

\bibitem{SchoenPrivate} C.~Schoen, private communication, 2012.


\end{thebibliography}
\end{document}